\documentclass[reqno,11pt]{amsart}
\usepackage{graphicx} 
\usepackage{amsthm}
\usepackage{amssymb}
\usepackage{amsmath}
\usepackage{bbm}
\usepackage{comment}
\usepackage[parfill]{parskip}
\usepackage{caption}
\usepackage{subcaption}
\usepackage{enumitem}
\usepackage[toc,page]{appendix}

\setlength{\oddsidemargin}{0mm}
\setlength{\evensidemargin}{0mm}
\setlength{\textwidth}{150mm}
\setlength{\topmargin}{0mm}
\setlength{\textheight}{220mm}
\setcounter{secnumdepth}{2}
\numberwithin{equation}{section}

\newcommand{\Probm}{\mathbb{P}}

\newcommand{\const}{\Tilde{c}}

\newcommand{\ConcavePotentialMeasure}{\hat{\Probm}}

\newtheorem{theorem}{Theorem}[section]
\newtheorem{prop}[theorem]{Proposition}
\newtheorem{definition}[theorem]{Definition}
\newtheorem{example}[theorem]{Example}
\newtheorem{lemma}[theorem]{Lemma}
\newtheorem{remark}[theorem]{Remark}

\newtheorem{assumption}[theorem]{Assumption}
\newtheorem{corollary}[theorem]{Corollary}

\newcommand{\bea}{\begin{eqnarray}}
\newcommand{\eea}{\end{eqnarray}}
\newcommand{\<}{\langle}
\renewcommand{\>}{\rangle}

\newcommand\eg{{\text{\eg~}}}

\def\eps{{\varepsilon}}

\def\de{{\rm d}}

\def\<{\langle}
\def\>{\rangle}



\def\b0{{\boldsymbol{0}}}

\renewcommand{\b}{\mathbf{b}}

\def\lt{\left}
\def\rt{\right}

\def\la{\langle}
\def\ra{\rangle}

\def\eps{\varepsilon}

\def\bbP{{\mathbb{P}}}
\def\bbR{{\mathbb{R}}}

\usepackage{xcolor}

\title[Brownian paths perturbed by pair potentials]{Mean square displacement of Brownian paths perturbed by bounded pair potentials}

\author[V. Betz]{Volker Betz}
\address{Volker Betz: TU Darmstadt}
\email{betz@mathematik.tu-darmstadt.de}
\author[T. Schmidt]{Tobias Schmidt}
\address{Tobias Schmidt: TU Darmstadt}
\email{tobias.schmidt@tu-darmstadt.de}
\author[M. Sellke]{Mark Sellke}
\address{Mark Sellke: Harvard University}
\email{msellke@fas.harvard.edu}

\begin{document}

\maketitle

\begin{abstract}
    We study Brownian paths perturbed by semibounded pair potentials and prove upper bounds on the mean square displacement. As a technical tool we derive
    infinite dimensional versions of key inequalities that were first used in \cite{Se22} in order to study the effective mass of the Fr\"ohlich polaron. 
\end{abstract}

\section{Introduction}
Perturbations of $d$-dimensional Brownian motion 
by a density that changes the behaviour of 
paths appear in many applications. A simple class of these measures is obtained by subjecting the Brownian paths to an external potential $V:\mathbb R^d \to \mathbb R$, resulting in probability measures of the type 
\[
\Probm_{V,T} (\de  x) = \frac{1}{Z_T} \exp \Big( 
- \int_0^T V(x_s) \, \de  s \Big) \Probm_{[0,T]} (\de x),
\]
where $\bbP_{[0,T]}$ is the Brownian path measure on $C \lt( [0,T]; \mathbb{R}^d \rt)$, and $Z_T$ is the normalization. Under suitable conditions on the function $V$ the Feynman-Kac
formula provides a link to semigroups of Schr\"odinger operators, and to the theory of It\^o-diffusions, and allows for an essentially complete understanding of these measures and their $T \to \infty$ limits. We refer to 
\cite{LBH-Book} for details. 

In this paper, we are concerned with the more difficult case where the perturbation appears through a pair potential $W$, meaning that we study 
probability measures of the form
\[
\ConcavePotentialMeasure_{\alpha,T} (\de x) = \frac{1}{\hat Z_T} \mathrm{e}^{\alpha \int_0 ^T \int_0 ^T W(\Vert x_t-x_s\Vert, |t-s| )  \de t\de s} \Probm_{[0,T]}(\de x).
\]
The potential acts on the Brownian increments, so that paths with increments which lead to large values of $W$ are favoured. 
$W$ usually decays to zero in the 
second argument fairly quickly, meaning that this influence is localized. We 
will be concerned with cases where $W(x,t)$ is maximal near $x=0$ for all $t$, 
which intuitively leads to a self-attractive force on Brownian paths: one 
would then expect that the mean square displacement 
$m_T(\alpha) = \ConcavePotentialMeasure_{\alpha,T}\lt( \Vert x_T \Vert^2 \rt)$ grows less quickly for large 
$\alpha$ 
than it does for small ones. 

Arguably the most important, and best studied case is where 
$W(x,t) = \frac{1}{|x|} {\mathrm e}^{-|t|}$ and the dimension is at least $3$, which corresponds to the 
Fr\"ohlich polaron, a basic model for matter interacting with a quantized 
field. We refer to \cite{DySp20} and the references therein for more background on these connections. 
A conjecture of Landau and Pekar \cite{LP48} 
states that in this case, $\lim_{T \to \infty} \frac{m_T(\alpha)}{T} = C_{\mathrm{LP}} \alpha^4 (1 + o(\alpha))$, 
with an explicit constant $C_{\mathrm{LP}}$. Proving this conjecture has been 
on the agenda of mathematical physicists at least since the seminal paper of Spohn \cite{Sp87}. However, 
significant progress was only made recently. First, 
using a point process representation of 
$\ConcavePotentialMeasure_{\alpha,T}$ introduced in \cite{MV19}, an upper 
bound of the order $\limsup_{T \to \infty} 
\frac{m_T(\alpha)}{T}  \leq C \alpha^{2/5}$ 
was shown in \cite{BP22b}, and then, using Gaussian domination, an almost 
optimal upper bound of the form $\limsup_{T \to \infty} 
\frac{m_T(\alpha)}{T} \leq C \alpha^4 / \log(\alpha)^6$ was 
shown in \cite{Se22}. In parallel, Brooks and Seiringer \cite{BrSe22} 
showed the sharp lower bound $\liminf_{T \to \infty} \frac{m_T(\alpha)}{T} \geq C_{\mathrm{LP}} \alpha^4 (1 + o(\alpha))$ by 
employing functional analytic methods on the quantum model of the polaron.  
So while the full Landau-Pekar conjecture is still not settled, we are now 
much closer to it than previously. 

The present paper builds on the techniques introduced in \cite{Se22} and has 
two main objectives: the first is concerned with the central tool introduced 
in \cite{Se22}, which is a way to leverage the Gaussian correlation 
inequality in order to obtain concentration inequalities for a rather large 
class of probability measures. In \cite{Se22}, these inequalities were 
introduced for finite dimensional measures, and their application to path 
measures was achieved via approximating the latter and by checking that the 
bounds obtained did not depend on the discretization parameter. 
Since these concentration inequalities may become a useful tool in various applications, 
it seems worth-while to develop a version that directly works in infinite 
dimensional spaces, and is thus directly applicable to path spaces. This is 
done in Theorems \ref{integra_ineq_prop} and \ref{produt_prop}. The second 
objective is to use these inequalities in order to get upper bounds on $m_T(\alpha)$
for potentials $W$ that are quite different from the Polaron case, in 
particular ones which are bounded from above and have compact support. This is the content of Theorem 
\ref{weaker_pot_thm}. The main conditions we require are that
$W$ must be 
symmetric and quasi-concave. Thus in many cases, fortunately including the Polaron, we now have powerful tools to estimate
the effect of pair potentials on Brownian paths.
However in several other cases the 
situation is just as unclear as it was before. In particular we currently 
have no tools to meaningfully estimate the effect of {\em repulsive} pair interactions. 

The paper is organized as follows: in Section 2, we introduce all relevant 
quantities and give precise statements of our results. In Section 3, we prove 
them. In an appendix, we collect and prove some facts about infinite dimensional measures that are most certainly known, but of which we were unable to find a satisfactory account in the literature.

\section{Definitions and Results}
Unless specified otherwise $\Omega$ is assumed to be a real, separable Banach space with Borel-$\sigma$-field $\mathcal{B}$.
A measurable function $f: \Omega \rightarrow \mathbb{R}$ is said to be symmetric whenever $f(x) = f(-x)$ for all $x \in \Omega$, and quasi-concave whenever  $\lt\{x: f(x) \geq r \rt\}$ is a convex set for all $r \in \mathbb{R}$.
\begin{definition}[QC]
     A function $f: \Omega \rightarrow \mathbb{R}$ is defined to be (QC) whenever $f$ is symmetric and quasi-concave. Furthermore, a function is also said to be (QC) whenever $f$ is a finite product of non-negative symmetric and quasi-concave functions or the uniformly bounded limit of such. 

     We say that a function $f: \Omega \rightarrow \mathbb{R}$ is (QC) on (a symmetric and convex set) $A$ whenever $\mathbbm{1}_A f$ is (QC).
 \end{definition}

The reader will have no problem to verify that the following functions are (QC):

        \begin{itemize}
            \item any symmetric function $m: \mathbb{R} \rightarrow \mathbb{R}$ which is decreasing on $\bbR_{\geq 0}$;
        \item $h \circ g$, where $g: \Omega \rightarrow \mathbb{R}$ is convex and symmetric and $h: \mathbb{R} \rightarrow \mathbb{R}$ is decreasing;
        \item $ \mathbbm{1}_K$, where $K \subseteq \Omega$ is a convex, symmetric set;
        \item $f \mathbbm{1}_K$, where $K \subseteq \Omega$ is a convex, symmetric set and $f \geq 0$ is (QC).
        \end{itemize}

\begin{example}
\label{integral_example}
    \normalfont
    Consider the special case $\Omega = C \lt( [0,1];\mathbb{R}^d \rt)$ and let $f \in C(\mathbb{R}^d;\mathbb{R})$ be bounded from above, symmetric and quasi-concave. The property (QC) is inherited by
    $$x \mapsto \mathrm{e}^{\int_0 ^1  f(x_t) \de t} = \lim\limits_{n \rightarrow \infty} \prod\limits^n _{j=1} \mathrm{e}^{\tfrac{1}{n} f(x_{j/n})}.$$
    This can be seen as follows: 
    $$x \mapsto \tfrac{1}{n} f(x_{j/n})$$ is (QC) per assumption. It follows that 
    $$x \mapsto \mathrm{e}^{\tfrac{1}{n} f(x_{j/n})}$$
    is (QC), as well. But then the entire product is uniformly bounded as the integral is bounded, and the claim is shown. The same reasoning lets us conclude that double integrals over increments are (QC), e.g. 
    $$x \mapsto \mathrm{e}^{\int_0 ^1 \int_0 ^1  f(x_t - x_s) \de s \de t}.$$
    
\end{example}

\begin{definition}
    \label{preceq_def}
    Let $\mu, \nu$ be two probability measures on $(\Omega, \mathcal{B})$. Write
    $$\nu \preceq \mu$$
    whenever
    $\frac{\de \nu}{\de \mu}$ is (QC).
\end{definition}

The significance of Definition \ref{preceq_def} stems from the following result.

\begin{prop}
Let $\mu, \nu$ be two probability measures on $(\Omega, \mathcal{B})$. Furthermore,
    let $\mu$ be a centred Gaussian measure and let $\nu \preceq \mu$.
    \begin{enumerate}
        \item For a (QC) function $f: \Omega \rightarrow \mathbb{R}$ with $f \geq 0$ or  $\Vert f \Vert_{\infty} < \infty$ 
    \begin{equation}
    \label{quasi_concave_inequ}
        \mu(f) \leq \nu(f).
        \end{equation}
        \item For $g:\Omega  \rightarrow \mathbb{R}_{\geq 0}$ symmetric and convex,
    \begin{equation}
    \label{convex_inequ}
        \mu(g) \geq \nu(g).
    \end{equation}
    \end{enumerate} 
\end{prop}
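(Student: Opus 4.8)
\emph{Strategy.} The plan is to derive both inequalities from the Gaussian correlation inequality: if $\mu$ is a centred Gaussian measure and $K,L$ are symmetric convex (Borel) sets, then $\mu(K\cap L)\ge\mu(K)\mu(L)$. In finite dimensions this is Royen's theorem; the separable-Banach-space version used here follows from it by approximating $\mu$ via its pushforwards under finite-rank projections. Packaging this, together with the remaining measure-theoretic facts about infinite-dimensional Gaussian measures (measurability of the relevant convex sets, behaviour of Radon--Nikodym densities under projection), is exactly what the appendix does, and from here on I treat the infinite-dimensional correlation inequality as given.

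\emph{Proof of \eqref{quasi_concave_inequ}.} Put $h:=\de\nu/\de\mu$, which is (QC) with $\mu(h)=1$. Since $\nu(f)=\mu(fh)$, the claim $\mu(f)\le\nu(f)$ is the positive-correlation statement $\mu(fh)\ge\mu(f)\mu(h)$. I would prove this first when $f=\prod_{i=1}^m f_i$ and $h=\prod_{j=1}^n h_j$ are finite products of non-negative symmetric quasi-concave functions (the case $m=n=1$ already covering a single such function). By the layer-cake identity $f_i=\int_0^\infty\mathbbm{1}_{\{f_i>r\}}\,\de r$ and Tonelli,
\[
\mu(fh)=\int_{[0,\infty)^{m+n}}\mu\bigl(A_{\vec r}\cap B_{\vec s}\bigr)\,\de\vec r\,\de\vec s,\qquad A_{\vec r}:=\bigcap_{i=1}^m\{f_i>r_i\},\quad B_{\vec s}:=\bigcap_{j=1}^n\{h_j>s_j\}.
\]
Each superlevel set $\{f_i>r_i\}$ is convex (by quasi-concavity) and symmetric (since $f_i$ is), hence so are the finite intersections $A_{\vec r}$ and $B_{\vec s}$; one application of the correlation inequality to the pair $(A_{\vec r},B_{\vec s})$ gives $\mu(A_{\vec r}\cap B_{\vec s})\ge\mu(A_{\vec r})\mu(B_{\vec s})$, and integrating back (Tonelli, using $\mu(f)=\int\mu(A_{\vec r})\,\de\vec r$ and likewise for $h$) gives $\mu(fh)\ge\mu(f)\mu(h)$. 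The general case follows by soft approximation: truncating factors, $\prod_i(f_i\wedge c)\uparrow f$ with each $f_i\wedge c$ still non-negative symmetric quasi-concave, reduces to bounded $f$ by monotone convergence; the uniformly bounded limits admitted in the definition of (QC) are handled by dominated convergence (if $0\le h^{(k)}\le M$ then $fh^{(k)}\le Mf$); and a bounded, possibly signed, symmetric quasi-concave $f$ is dealt with by applying the inequality to the non-negative (QC) function $f+\|f\|_\infty$ and cancelling the constant (both $\mu$ and $\nu$ being probability measures).

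\emph{Proof of \eqref{convex_inequ}.} This reduces to \eqref{quasi_concave_inequ} by a flip. For $g\ge 0$ symmetric and convex and any $M>0$, the function $M-(g\wedge M)=(M-g)_+$ is non-negative, bounded by $M$, symmetric, and quasi-concave, since its superlevel sets $\{g\wedge M\le M-r\}$ are sublevel sets of the convex function $g$, hence convex; thus it is (QC). Applying \eqref{quasi_concave_inequ} to it gives $\mu(M-g\wedge M)\le\nu(M-g\wedge M)$, i.e. $M-\mu(g\wedge M)\le M-\nu(g\wedge M)$ (using that $\mu,\nu$ are probability measures), so $\nu(g\wedge M)\le\mu(g\wedge M)$; letting $M\uparrow\infty$ and using monotone convergence on both sides yields $\nu(g)\le\mu(g)$.

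\emph{Expected difficulty.} Once the correlation inequality is in hand, the proposition is the layer-cake computation above together with routine limiting arguments, so the only genuinely delicate point is transporting the correlation inequality --- and the truncation/limit arguments --- to the infinite-dimensional setting: the measurability of the convex superlevel sets, the finite-rank approximation of $\mu$, and the compatibility of these with the density $h$. This is the material isolated in the appendix; in finite dimensions the proposition is essentially the content of the inequalities in \cite{Se22}.
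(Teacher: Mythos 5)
Your proof is correct and follows essentially the same route as the paper's: both parts reduce to the positive-correlation statement $\mu(fh)\ge\mu(f)\mu(h)$ for the (QC) density $h=\de\nu/\de\mu$, with the signed bounded case handled by shifting by $\|f\|_\infty$ and the convex case by the same truncate-and-flip device (your $M-(g\wedge M)$ is the paper's $-\min\{R,g\}$ plus a constant). The only difference is that you re-derive the functional Gaussian correlation inequality from the set version via a layer-cake argument, whereas the paper simply cites this functional form from \cite{BP22a}.
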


\begin{proof} 
     A functional version of the Gaussian correlation inequality (GCI, \cite{Ro14}) states that 
     for any centered Gaussian measure $\mu$ and any (QC) functions $f,g \geq 0$, we have $\mu(fg) \geq \mu(f) \mu(g)$; see \cite[Proposition 5.1]{BP22a}. This immediately gives 
     (\ref{quasi_concave_inequ}) for non-negative $f$:  
    \begin{equation}
        \label{divide_equ}
        \mu(f) \mu \lt(\frac{\de \nu}{\de \mu} \rt) \leq \mu \lt(f \frac{\de \nu}{\de \mu} \rt).
    \end{equation}
    For $f$ bounded and (QC) the claim follows by applying (\ref{quasi_concave_inequ}) to the non-negative (QC) function $f + \Vert f \Vert_{\infty}$.

    For (\ref{convex_inequ}) note that, for any $R>0$, $ - \min \{ R,g\}$ is (QC) as $\min(R,\cdot)$ is non-decreasing. Clearly  this function is bounded and so inequality (\ref{quasi_concave_inequ}) can be applied to obtain
    $$- \mu\lt( \min \{ R,g\} \rt) \leq  - \nu \lt( \min \{ R,g\}  \rt).$$ 
    An application of monotone convergence finishes the proof. 
\end{proof}

Define for a probability measure $\mu$ and for each Borel set $A \in \mathcal{B}$
$$\mu ^{\times 2} (A) := \mu (A/2).$$
The next Lemma is essential in the proofs of Theorem \ref{integra_ineq_prop} and Theorem \ref{produt_prop}. It provides us with a tool that makes it possible to work with a centred Gaussian measure $\mu$ on a finite-dimensional vector space as if it would be supported on a convex, symmetric set $K$ which has most of its mass. The important part is that the ``bad part'' of $\mu$, which is not supported on $K$, is still dominated by $\mu^{\times 2}$; this makes it possible to obtain easy error estimates. We note that the proof given in \cite{Se22} is not applicable whenever the support of $\mu$ is infinite-dimensional due to the fact that $\mu^{\times 2}$ and $\mu$ are singular; otherwise, the result of Theorem \ref{integra_ineq_prop} would be immediate.  

\begin{lemma}[{\cite[Lemma 3.1]{Se22}}]
\label{Se22Lemma31}
Let $\mu$ be a centred Gaussian measure on a finite-dimensional real vector-space $X$ and let $K \subseteq X$ be a symmetric convex set with $\mu(K) > 1- \delta$ for some $\delta \leq 0.1$. Then, there exists a decomposition 
$$\mu = (1-\delta') \nu + \delta' \overline{\nu}$$
of $\mu$ into a mixture of probability measures such that for some $c_1 \geq 100$:
\begin{enumerate}
    \item $\delta' \leq \delta$,
    \item supp($\nu$) $\subseteq c_1 K$,
    \item $\nu \preceq \mu$,
    \item $\overline{\nu} \preceq \mu^{\times 2}$.
\end{enumerate}
\end{lemma}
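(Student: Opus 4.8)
The plan is to construct the decomposition by hand using the explicit Gaussian structure, following the scheme of \cite{Se22} but being careful since here we want estimates that will later pass to the infinite-dimensional limit cleanly. Write $\mu = \mathcal{N}(0,\Sigma)$ on $X$. First I would normalize: after a linear change of variables we may assume $\mu$ is the standard Gaussian, which preserves symmetry and convexity of $K$ and preserves the relation $\preceq$ (linear images of centred Gaussians are centred Gaussians, and composing a (QC) function with an invertible linear map keeps it (QC)). So it suffices to treat $\mu = \gamma$, the standard Gaussian on $\mathbb{R}^n$.

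The key computation is the pointwise comparison of the density of $\gamma$ on the ``good'' region against the densities of $\gamma$ and $\gamma^{\times 2}$. Note $\gamma^{\times 2}$ has density proportional to $\exp(-\|x\|^2/8)$, i.e. it is $\mathcal{N}(0,4\,\mathrm{Id})$. The idea is to split $\gamma$ according to a well-chosen decomposition of its density $\varphi(x) \propto e^{-\|x\|^2/2}$ as a convex combination of (an unnormalized multiple of) a density supported — essentially — on a dilate $c_1 K$ of $K$, plus a remainder that is pointwise dominated by a multiple of $\varphi^{\times 2}(x) \propto e^{-\|x\|^2/8}$. Concretely I would look for a threshold radius $r$ (to be taken proportional to $\sqrt{n}$ when $K$ is, say, a ball, but in general calibrated via $\delta$ and a concentration/comparison argument) and write
\[
e^{-\|x\|^2/2} = \min\big(e^{-\|x\|^2/2}, a\, e^{-\|x\|^2/8}\,\mathbbm{1}_{c_1 K}(x)\big) + \text{remainder},
\]
choosing the constant $a$ and the dilation factor $c_1$ so that the first term, once renormalized, is a probability measure $\nu$ with $\mathrm{d}\nu/\mathrm{d}\mu$ of the form (decreasing function of $\|x\|$) times $\mathbbm{1}_{c_1 K}$, hence (QC) — giving property (3) — and supported in $c_1 K$, giving (2); while the remainder, once renormalized to $\bar\nu$, has $\mathrm{d}\bar\nu/\mathrm{d}\mu^{\times 2}$ bounded by a constant times a decreasing function of $\|x\|$, hence (QC), giving (4). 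The mixing weight $\delta'$ is the total remainder mass, and the whole point is to bound it by $\delta$.

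The main obstacle — and where the real work lies — is verifying $\delta' \le \delta$, i.e. that the remainder mass is controlled by $1 - \mu(K)$. This is not automatic because $K$ is an arbitrary symmetric convex set, not a ball, so one cannot literally threshold on $\|x\|$; the fix (as in \cite{Se22}) is to use that for a symmetric convex $K$ with $\mu(K) > 1-\delta$, the dilate $c_1 K$ with $c_1$ large captures overwhelmingly more mass, via the following standard fact: if $\mu(K) \ge 1/2$ then $\mu((tK)^c) \le 2\,\mu(K^c)^{?}$-type bounds, or more robustly, the inequality $1 - \mu(tK) \le C\,e^{-c t^2}$ coming from Gaussian isoperimetry / Borell's inequality once $\mu(K)$ is bounded below. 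Combining such a dilation estimate with the pointwise density bound lets one absorb everything not on $c_1 K$ into the $\varphi^{\times 2}$-dominated remainder at a cost $\delta' \le \delta$, using $c_1 \ge 100$ and $\delta \le 0.1$ to make the numerics work. I would carry out the steps in this order: (i) reduce to standard Gaussian; (ii) recall Borell's lemma to get $1-\mu(c_1 K)$ small relative to $\delta$; (iii) define the pointwise density split with parameters $a, c_1$; (iv) identify $\nu$, $\bar\nu$ and check the (QC) properties (2),(3),(4) directly from the explicit formulas; (v) bound the remainder mass to get (1). The (QC) checks in step (iv) are routine given the bullet list after the (QC) definition — each Radon–Nikodym derivative is a decreasing function of a norm times an indicator of a symmetric convex set — so the genuine difficulty is concentrated in the mass bound of step (ii)+(v).
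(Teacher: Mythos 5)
First, note that this paper does not prove the lemma at all: it is imported as a black box from \cite[Lemma 3.1]{Se22}, and the surrounding text only explains why that finite-dimensional argument cannot be run directly in infinite dimensions. So your sketch must be judged against what a correct proof requires, and there it has a genuine gap: your concrete density split gets the direction of the Gaussian density comparison backwards, and as a consequence neither (3) nor (4) holds for the measures you define. With $\mu$ standard, $q(x):=\frac{\de\mu}{\de\mu^{\times 2}}(x)=2^n e^{-3\|x\|^2/8}$ is a \emph{decreasing} function of $\|x\|$. Hence your first piece has $\frac{\de\nu}{\de\mu}=\min\bigl(1,\tfrac{a}{2^n}e^{3\|x\|^2/8}\bigr)\mathbbm{1}_{c_1K}$, which is \emph{increasing} in $\|x\|$ where unsaturated. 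Every symmetric quasi-concave function is maximized at the origin (compare $x$, $-x$ and their midpoint), and this property survives products and uniformly bounded limits, so this density is not (QC) unless $a\ge 2^n$. In that degenerate case $\nu$ is $\mu$ conditioned on $c_1K$, but then the remainder is exactly $\mu$ restricted to $(c_1K)^c$, whose density with respect to $\mu^{\times 2}$ is proportional to $\mathbbm{1}_{(c_1K)^c}e^{-3\|x\|^2/8}$: it vanishes at the origin and has non-convex superlevel sets, so (4) fails. In the non-degenerate case the remainder density $(q-a)_+\mathbbm{1}_{c_1K}+q\,\mathbbm{1}_{(c_1K)^c}$ jumps \emph{up} when crossing $\partial(c_1K)$ outward at fixed radius, again producing non-convex superlevel sets. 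Note also that ``bounded by a constant times a decreasing function of $\|x\|$'' is not the relevant condition: $\preceq$ requires the Radon--Nikodym derivative itself to be (QC), and pointwise domination by a multiple of the $\mu^{\times 2}$-density is vacuous here since $q\le 2^n$ already gives $\mu\le 2^n\mu^{\times 2}$ pointwise.

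There is also a structural obstruction that your steps (ii) and (v) cannot repair. The level sets of $q$ are Euclidean balls, and to make the $\mu^{\times 2}$-dominated part carry mass at most $\delta$ one must keep essentially all of $\{q\ge\lambda\}=B(0,R_\lambda)$ with $R_\lambda\asymp\sqrt n$ inside the good part; but for the slab $K=\{|x_1|\le 1.65\}$ in $\bbR^n$ (which has $\mu(K)\approx 0.9$) the ball $B(0,2\sqrt n)$ is not contained in $100K$ once $n$ is large. So no decomposition obtained by thresholding on the Euclidean norm and intersecting with $c_1K$ can satisfy (1)--(4) with a universal $c_1$, and Borell's lemma (which only controls the \emph{mass} outside $tK$) does not address this, because the failure above is geometric rather than a matter of mass. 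The actual content of \cite[Lemma 3.1]{Se22} is to reconcile the Euclidean geometry of $q$ with the geometry of $K$: one needs $\frac{\de\bar\nu}{\de\mu^{\times 2}}$ of the form $\frac{1}{\delta'}\min(q,m)$ where $m$ is itself symmetric and quasi-concave and lies above $q/\delta'$ off $c_1K$ (e.g.\ a symmetric quasi-concave envelope of $q\mathbbm{1}_{(c_1K)^c}/\delta'$), so that the minimum of two quasi-concave functions is quasi-concave and the complementary piece $\frac{1}{1-\delta'}(1-\delta' m/q)_+$ is (QC) and supported in $c_1K$. That construction, not the mass estimate, is the missing idea in your proposal.
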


For a non-negative function $f$ with $\mu(f) < \infty$ we write
$$\mu^{(f)}(\de x) := \frac{f(x)}{\mu(f)}\mu(\de x).$$
We will use the notation $\mu \propto \nu$ for two 
positive measures if $\nu$ is a finite measure and $\mu = \tfrac{\nu}{\nu(\Omega)}$. In this notation, 
\[
\mu^{(f)}(\de x) \propto f(x) \mu(\de x).
\]
It is possible to approximate any centred, non-degenerate Gaussian measure $\mu$ by centred Gaussian measures with finite-dimensional support using Theorem \ref{approximation_theorem}. In what follows $(\mu_n)_{n \in \mathbb{N}}$ references this approximation sequence. It is now possible to state the first result:
\begin{theorem}
\label{integra_ineq_prop}
    Let $\mu$ be a non-degenerate, centred Gaussian measure and let $A \in \mathcal{B}$ be a closed, symmetric convex set with $\mu(A^c) < 0.1$ and $ \mu(\partial A)=0$.  
    \begin{enumerate}
        \item There exists a constant $c_1 <\const < c_1+1$ s.t. for any (QC) function $f \in C_b(\Omega;\mathbb{R}_{\geq 0})$ and $\delta \leq \mu(A^c)$,
    \begin{equation}
    \label{integral_inequ_1}
        \mu (f) \geq (1-\delta) \mu(f;\const A) + \delta \mu^{\times 2}(f).
    \end{equation}
    \item Take two (QC) functions $g \in C_b(\Omega;\mathbb{R}_{\geq 0})$ and $h \in C_b(\Omega;\mathbb{R}_{> 0})$. If, for all $n \in \mathbb{N}$, $\mu_n ^{(h)} \preceq \mu_n$ holds and $\mu_n ^{(h)}$ is still a centred Gaussian measure and $g/h$ is (QC) on $c_{1} A$, then for any (QC) function $f \in C_b(\Omega; \mathbb{R})$
    \begin{equation}
    \label{integral_inequ_2}
        \mu^{(g)} (f) \geq (1-\delta) \mu ^{(h)}(f) + \delta \mu^{\times 2}(f).
    \end{equation}   
    \end{enumerate}
\end{theorem}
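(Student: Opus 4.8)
The plan is to prove finite‑dimensional versions of \eqref{integral_inequ_1} and \eqref{integral_inequ_2} along the approximating sequence $(\mu_n)$ and then pass to the limit. I would take the approximation to be of the form $\mu_n=\Law(P_nX)$ with $X\sim\mu$ and finite‑rank projections $P_n$ increasing to the identity; then $P_nX\to X$ almost surely, so $\mu_n\to\mu$ and $\mu_n^{\times2}\to\mu^{\times2}$ weakly, and, using $\mu_n^{(\varphi)}(\psi)=\mu_n(\psi\varphi)/\mu_n(\varphi)$ together with $\mu(g),\mu(h)>0$, also $\mu_n^{(g)}\to\mu^{(g)}$ and $\mu_n^{(h)}\to\mu^{(h)}$ weakly. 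Before the main argument I would fix $\const$: since $\mu(A^c)<0.1$ and $\mu(\partial A)=0$, the set $A$ has non‑empty interior (otherwise $\partial A=A$, as $A$ is closed, so $\mu(\partial A)>0.9$), hence $\bzero\in\mathrm{int}(A)$ by symmetry; consequently the dilated boundaries $\partial(tA)=t\,\partial A$, $t>0$, are pairwise disjoint (a common point $x$ of $s\,\partial A$ and $t\,\partial A$ with $s<t$ would make $x/t$ a strict convex combination of $x/s\in A$ and $\bzero\in\mathrm{int}(A)$, so $x/t\in\mathrm{int}(A)$, a contradiction), so only countably many of them carry positive $\mu$‑mass and I may choose $\const\in(c_1,c_1+1)$ with $\mu(\partial(\const A))=0$. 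For this $\const$ the bounded function $f\ind_{\const A}$ has $\mu$‑null discontinuity set, so $\mu_n(f;\const A)\to\mu(f;\const A)$ for all $f\in C_b(\Omega;\R)$.

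For part (1): for $n$ large, $\mu_n(A^c)<0.1$, so Lemma~\ref{Se22Lemma31} applied to $\mu_n$ on its finite‑dimensional support, with the convex symmetric set $A$, gives $\mu_n=(1-\delta_n')\nu_n+\delta_n'\bar\nu_n$ with $\delta_n'\le\mu_n(A^c)$, $\supp(\nu_n)\subseteq c_1A\subseteq\const A$, $\nu_n\preceq\mu_n$ and $\bar\nu_n\preceq\mu_n^{\times2}$. As $\mu_n$ and $\mu_n^{\times2}$ are finite‑dimensional centred Gaussians, the Proposition gives $\nu_n(f)\ge\mu_n(f)$ and $\bar\nu_n(f)\ge\mu_n^{\times2}(f)$ for every (QC) $f\ge0$; since moreover $\nu_n(f)\ge\mu_n(f)\ge\mu_n(f;\const A)$, inserting this into the decomposition yields \eqref{integral_inequ_1} for $\mu_n$ at $\delta=\delta_n'$. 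As the right‑hand side is affine in $\delta$ and the case $\delta=0$ is trivial, \eqref{integral_inequ_1} holds for $\mu_n$ and all $\delta\le\delta_n'$; for $\delta\in(\delta_n',\mu_n(A^c)]$ it reduces, after rearrangement, to $\mu_n(f)\ge\mu_n^{\times2}(f)$, which is again immediate from $\mu_n\preceq\mu_n^{\times2}$ and the Proposition. Fixing $\delta\le\mu(A^c)$: for $\delta<\mu(A^c)$ the inequality for $\mu_n$ holds for all large $n$, and letting $n\to\infty$ (using the weak convergences and $\mu(\partial(\const A))=0$) gives \eqref{integral_inequ_1}; the endpoint $\delta=\mu(A^c)$ follows by continuity of the right‑hand side in $\delta$.

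For part (2): write $\rho_n:=\mu_n^{(h)}$, a centred Gaussian by hypothesis, and use the change‑of‑measure identity $\mu_n^{(g)}=\rho_n^{(g/h)}$. From $\rho_n\preceq\mu_n$ and the Proposition (applied to $\ind_A$) we get $\rho_n(A)\ge\mu_n(A)$, so $\rho_n(A^c)<0.1$ for $n$ large, and Lemma~\ref{Se22Lemma31} applies to $\rho_n$ with $A$: $\rho_n=(1-\delta_n')\nu_n+\delta_n'\bar\nu_n$ with $\supp(\nu_n)\subseteq c_1A$, $\nu_n\preceq\rho_n$, $\bar\nu_n\preceq\rho_n^{\times2}$. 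Since $\nu_n$ lives in $c_1A$ and $g/h$ is (QC) on $c_1A$, the $\rho_n$‑density of $\nu_n^{(g/h)}$ is, up to a constant, the product $\ind_{c_1A}(g/h)\cdot\frac{\de\nu_n}{\de\rho_n}$ of non‑negative (QC) functions, hence (QC); so $\nu_n^{(g/h)}\preceq\rho_n$ and the Proposition gives $\nu_n^{(g/h)}(f)\ge\rho_n(f)=\mu_n^{(h)}(f)$ for every (QC) $f\in C_b(\Omega;\R)$. For the error part, $\frac{\de\rho_n}{\de\mu_n}$ is (QC) (it equals $h/\mu_n(h)$), hence so is its dilation $x\mapsto\frac{\de\rho_n}{\de\mu_n}(x/2)$, whence $\rho_n^{\times2}\preceq\mu_n^{\times2}$ and therefore $\bar\nu_n\preceq\mu_n^{\times2}$, so $\bar\nu_n(f)\ge\mu_n^{\times2}(f)$. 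Writing $\mu_n^{(g)}=\rho_n^{(g/h)}$ as the induced mixture of $\nu_n^{(g/h)}$ and $\bar\nu_n^{(g/h)}$ and feeding in these comparisons (together with the elementary chain $\mu_n^{(h)}(f)\ge\mu_n(f)\ge\mu_n^{\times2}(f)$ for (QC) $f$) yields \eqref{integral_inequ_2} for $\mu_n$, for $\delta$ up to the relevant deficiency; passing to the limit via the weak convergence of $\mu_n^{(g)}$, $\mu_n^{(h)}$ and $\mu_n^{\times2}$ then finishes the proof.

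The hard parts are twofold. The first is the passage to the limit itself: one must set up the approximation so that every derived measure ($\mu_n^{\times2}$, $\mu_n^{(g)}$, $\mu_n^{(h)}$) converges weakly, and one genuinely needs $\mu(\partial(\const A))=0$, since in infinite dimensions a $\mu$‑null set need not remain null under dilation — this is exactly the singularity of $\mu$ and $\mu^{\times2}$ that makes the ``divide by the density'' argument fail and forces the use of Lemma~\ref{Se22Lemma31}. The second, specific to part (2), is the book‑keeping in splitting $\mu_n^{(g)}=\rho_n^{(g/h)}$ along the decomposition of $\rho_n$: because $g/h$ is (QC) only on $c_1A$, only the $\nu_n$‑piece compares directly with $\mu_n^{(h)}$, while the $\bar\nu_n$‑piece (and its $g/h$‑reweighting, which need not be (QC)) has to be absorbed into the $\delta\,\mu^{\times2}(f)$ term using $\rho_n^{\times2}\preceq\mu_n^{\times2}$; getting the mixture weights and the constants to line up here is where the real work lies.
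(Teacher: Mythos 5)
Part (1) of your argument is essentially the paper's proof: approximate by finite-dimensional Gaussians, apply Lemma \ref{Se22Lemma31} there, use the Proposition (GCI) on each piece, and pass to the limit using $\mu(\partial(\const A))=0$. Your way of choosing $\const$ — observing that $\bzero\in A^{\circ}$ forces the dilated boundaries $t\,\partial A$ to be pairwise disjoint, so only countably many carry mass — is a clean substitute for the paper's Lemma \ref{boundary_lemma}, which reaches the same conclusion via continuity points of $t\mapsto\mu(tA^{\circ})$. This part is fine.

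Part (2), however, has a genuine gap, and it sits exactly where you flag "the real work." You decompose $\rho_n:=\mu_n^{(h)}$ via Lemma \ref{Se22Lemma31} and then reweight by $g/h$. The good piece works, but the bad piece does not: to get $\bar\nu_n^{(g/h)}(f)\geq\mu_n^{\times 2}(f)$ you would need $\tfrac{\de\bar\nu_n^{(g/h)}}{\de\mu_n^{\times2}}\propto (g/h)\cdot\tfrac{\de\bar\nu_n}{\de\mu_n^{\times2}}$ to be (QC), and $g/h$ is only assumed (QC) on $c_1A$, while $\bar\nu_n$ is \emph{not} supported there — off $c_1A$ the ratio $g/h$ has no structure at all ($h$ is merely positive, bounded, continuous). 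Your proposed fix, "absorb it using $\rho_n^{\times2}\preceq\mu_n^{\times2}$," does not touch the offending $g/h$ factor. The same problem infects the mixture weights: bounding the bad-piece weight by $\delta_n'$ requires $\nu_n(g/h)\geq\rho_n(g/h)$, but GCI only gives $\nu_n(g/h)\geq\rho_n\lt(\ind_{c_1A}\,g/h\rt)$, which is the wrong direction relative to $\rho_n(g/h)$. Since $f$ may be negative, you cannot simply discard the bad piece either. The paper's route avoids both problems by decomposing $\mu_n$ itself and reweighting by $g$ alone: then the bad piece's density with respect to $\mu_n^{\times2}$ is $g\cdot\tfrac{\de\bar\nu_n}{\de\mu_n^{\times2}}$, a product of \emph{globally} non-negative (QC) functions, and the weight comparison $\nu_n(g)\geq\mu_n(g)$ follows from GCI for the globally (QC) function $g$; the locally-(QC) ratio $g/h$ is only ever evaluated on the good piece, whose support lies in $c_1A$, via $\tfrac{\de\nu_n^{(g)}}{\de\mu_n^{(h)}}\propto \ind_{c_1A}(g/h)\,\tfrac{\de\nu_n}{\de\mu_n}$. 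As written, your part (2) does not go through.
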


Our next result is an extension of Theorem \ref{integra_ineq_prop} to the $T$-fold product of a centred Gaussian measure, $\Tilde{\mu} = \mu^{\otimes T}$. The strategy to prove Theorem \ref{produt_prop} will be to split each component of  $\mu^{\otimes T}$ using Lemma \ref{Se22Lemma31}. We then have to deal with $2^T$ product measures indexed by $\gamma \in \{0,1\}^T$,
where on each component the support differs. 

    Fix a closed, convex symmetric set $A \subseteq \mathcal{B}$.
    For $\gamma \in \{0,1\}^T$ define  $$A^{\gamma} := B_{\gamma(0)} \times ... \times B_{\gamma(T-1)},$$
    where  $B_0 = \Omega$, $B_1 = A$.
    Also,
    $$ S(\gamma):= \{ i < T-1: \gamma(i) = 1 = \gamma(i+1) \}$$
    and, for $j=0,1$,
    $$M_j(\gamma) :=  \{ i: \gamma(i) = j\}.$$

In the following, a vector $\mathbf{h}$ is defined such that finite dimensional approximations of $\mu$ remain centred Gaussian after reweighting. Then, if a given perturbation $g$ of $\Tilde{\mu}$ is sufficiently confining, it is possible to apply (GCI) to swap the density from $g$ to $\mathbf{h}$.
\begin{definition}
\label{def_domination}
Define
    $$\mathbf{h} := (h_1,h_2)$$
    for some non-negative symmetric quadratic forms $h_1: \Omega \rightarrow \mathbb{R}$, $h_2: \Omega^2 \rightarrow \mathbb{R}$.
    
    We say $\mathbf{h}$ \textbf{dominates} $g$ on $A$ whenever 
    $$\mathrm{e}^{g(x_0,...,x_{T-1}) + \sum\limits_{j=0}^{T-1} \mathbbm{1}_{\gamma(j)=1} h_1(x_j) + \sum\limits_{i \in S(\gamma)} h_2 (x_i,x_{i+1})}$$
    is (QC) on $A^{\gamma}$ for each $\gamma \in \{0,1\}^T$.
\end{definition}

    We conclude with a convenient notation for dominating measures. For any measure $\mu$ on $\Omega$ and $\mathbf{x}:= (x_0,...,x_{T-1})$ define
    $$\mathbf{\mu} ^{\la \mathbf{h}(\gamma) \ra}(\de \mathbf{x}) \propto \mathrm{e}^{\sum\limits_{j=0}^{T-1} \mathbbm{1}_{\gamma(j)=1} h_1(x_j) + \sum\limits_{i \in S(\gamma)} h_2 (x_i,x_{i+1})} \mu_\gamma (\de \mathbf{x})$$
    with
    $$\mu_\gamma := \bigotimes_{j=0} ^{T-1} \mu_{\gamma(j)}$$
    where $\mu_0 := \mu^{\times 2}$ and $\mu_1 := \mu$.
    Also,
    $$\mathbf{\mu} ^{-\la \mathbf{h}(\gamma) \ra} := \mathbf{\mu} ^{\la -\mathbf{h}(\gamma) \ra}. $$
Similarly, for any function $g $ which is bounded from above,
$$\mu^{\la g \ra} (\de x) \propto \mathrm{e}^{g(x)}  \mu (\de x).$$
In this notation it follows that if $\mathbf{h}$ dominates $g$ on $A$, then $\Tilde{\mu}^{\la g \ra} \preceq \Tilde{\mu}^{-\la \mathbf{h}(\gamma)\ra}$ on $A^\gamma$. This observation leads to the second Theorem.

\begin{theorem}
    \label{produt_prop}
    Let $\mu$ be a centred, non-degenerate Gaussian measure.  Let $A \in \mathcal{B}$ be a closed, symmetric convex set with $\mu(A^c) < 0.1$ and $\mu(\partial A)=0$. Fix some $T \in \mathbb{N}$, let $f \in C_b(\Omega^T;\mathbb{R})$ be (QC), and let $g \in C (\Omega^T;\mathbb{R})$ be bounded above with $\mathrm{e}^g$ (QC). If $\mathbf{h}$ dominates $g$ on $c_{1} A$,
    then there exist weights \footnote{We define weights as $w(\gamma) \geq 0$ for all $\gamma \in \{0,1 \}^T $ and $\sum\limits_{\gamma \in \{0,1 \}^T} w(\gamma) = 1$.}  $\lt\{w({\gamma}) : \gamma \in \{0,1\}^T \rt\}$  satisfying
    $$\sum\limits_{\gamma \in \{0,1 \}^T} w(\gamma)|M_1(\gamma )| \geq T(1- \delta)$$
    and
    \begin{equation}
        \label{integral_inequ_product}
        (\mu^{\otimes T})^{\la g \ra} (f) \geq \sum\limits_{\gamma \in \{0,1 \}^T} w (\gamma) \mathbf{\mu} ^{- \la \mathbf{h}(\gamma) \ra}  (f).
    \end{equation}
\end{theorem}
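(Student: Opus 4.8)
The plan is to run an induction on the number of coordinates, peeling off one factor of $\mu$ at a time from the product $\mu^{\otimes T}$ and applying the one-dimensional splitting Lemma \ref{Se22Lemma31} (via its functional consequence Theorem \ref{integra_ineq_prop}) to that factor, while keeping the remaining $T-1$ coordinates frozen. Concretely, for each coordinate $j$ one uses a finite-dimensional approximation $\mu_n$ of $\mu$ (as furnished before Theorem \ref{integra_ineq_prop}) so that Lemma \ref{Se22Lemma31} applies to $\mu_n$ with the convex symmetric set $A$ (note $\mu_n(A^c)<0.1$ can be arranged for $n$ large since $\mu(\partial A)=0$), giving $\mu_n = (1-\delta_n')\nu_n + \delta_n' \overline\nu_n$ with $\delta_n'\le\delta$, $\mathrm{supp}(\nu_n)\subseteq c_1 A$, $\nu_n\preceq\mu_n$, and $\overline\nu_n\preceq\mu_n^{\times 2}$. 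Passing to the limit $n\to\infty$ yields an analogous decomposition of $\mu$ itself at the level of the inequalities we need (this is exactly the content/method of Theorem \ref{integra_ineq_prop}, so I would invoke it rather than redo the limiting argument). Iterating over $j=0,\dots,T-1$ produces a mixture indexed by $\gamma\in\{0,1\}^T$: coordinate $j$ is ``split to $\nu$'' when $\gamma(j)=1$ (its measure becomes $\mu_1=\mu$ restricted near $c_1 A$, with weight mass $\ge 1-\delta$ there) and ``split to $\overline\nu$'' when $\gamma(j)=0$ (its measure becomes $\mu_0=\mu^{\times 2}$). The mixture weights $w(\gamma)$ are the products of the $(1-\delta')$ and $\delta'$ factors accumulated along the way; since each coordinate carries mass $\ge 1-\delta$ on the ``$1$'' side, linearity of expectation gives $\sum_\gamma w(\gamma)|M_1(\gamma)|\ge T(1-\delta)$ immediately.

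The second ingredient is the density swap from $g$ to $\mathbf h$. On the event $A^\gamma = B_{\gamma(0)}\times\cdots\times B_{\gamma(T-1)}$, the coordinates in $M_1(\gamma)$ live in $c_1 A$ (after the $c_1$-dilation in Lemma \ref{Se22Lemma31}(2)), so the hypothesis that $\mathbf h$ dominates $g$ on $c_1 A$ means precisely that $\exp\big(g(\mathbf x)+\sum_{j}\mathbbm 1_{\gamma(j)=1}h_1(x_j)+\sum_{i\in S(\gamma)}h_2(x_i,x_{i+1})\big)$ is (QC) on $A^\gamma$. Equivalently, writing things multiplicatively, $\widetilde\mu^{\la g\ra}\preceq\widetilde\mu^{-\la\mathbf h(\gamma)\ra}$ on $A^\gamma$, as observed in the paragraph preceding the theorem. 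One then applies part (1) of the basic Proposition (the GCI consequence $\mu(f)\le\nu(f)$ for $\nu\preceq\mu$, $f$ (QC)) coordinatewise: on each $\gamma$-piece, the reference product measure is Gaussian (this is where the assumption built into Definition \ref{def_domination}, that the $h_i$ are nonnegative symmetric quadratic forms so that reweighting by $\mathrm e^{h_i}$ keeps finite-dimensional approximations centred Gaussian, is used — it mirrors the hypothesis "$\mu_n^{(h)}$ is still a centred Gaussian measure'' in Theorem \ref{integra_ineq_prop}(2)), and the density $\mathrm e^{g}/\mathrm e^{-\mathbf h(\gamma)}$ is (QC) there, so replacing the $g$-weighted product by the $(-\mathbf h(\gamma))$-weighted product can only decrease the expectation of the (QC) test function $f$. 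Summing the resulting coordinatewise inequalities against the weights $w(\gamma)$ yields \eqref{integral_inequ_product}.

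To organize the bookkeeping cleanly I would first prove the $T=1$ case — which is essentially \eqref{integral_inequ_1}–\eqref{integral_inequ_2} of Theorem \ref{integra_ineq_prop} rephrased in the $\la\mathbf h(\gamma)\ra$ notation, with $\gamma\in\{0,1\}$ — and then induct: condition on $(x_1,\dots,x_{T-1})$, apply the $T=1$ statement to the $x_0$-marginal (which is a centred non-degenerate Gaussian, and for which $\mathrm e^{g(\cdot,x_1,\dots,x_{T-1})}$ times the relevant $h$-factors is (QC) for a.e. fixed tail by Fubini-type slicing of quasi-concavity on $A^\gamma$), collect the two resulting pieces, and apply the inductive hypothesis on $\Omega^{T-1}$ to each. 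The main obstacle is making the coordinatewise "density swap'' rigorous simultaneously with the splitting: one must check that after splitting coordinate $j$, the conditional density in the remaining coordinates still has the (QC)/domination structure needed to continue, i.e. that (QC)-ness of $\exp(g+\sum h)$ on $A^\gamma$ is preserved under the restriction/conditioning operations and that the product of a centred Gaussian with the split pieces on other coordinates remains in the scope of the Proposition. This is where the quadratic-form assumption on $\mathbf h$ and the closedness/convexity/symmetry of $A$ (hence of $A^\gamma$) are essential, and where the infinite-dimensional subtlety flagged before Lemma \ref{Se22Lemma31} (singularity of $\mu^{\times 2}$ and $\mu$) must be handled through the approximation sequence $\mu_n$ rather than directly — so in practice each swap is performed at finite level $n$ and the limit is taken at the end, exactly as in the proof of Theorem \ref{integra_ineq_prop}.
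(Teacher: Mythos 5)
Your overall architecture (finite-dimensional approximation of $\mu$, coordinatewise application of Lemma \ref{Se22Lemma31}, a GCI density swap from $g$ to $-\mathbf h(\gamma)$, and a limit at the end) matches the paper's, but two of your steps do not go through as described. First, the weight bound: the products of $(1-\delta')$ and $\delta'$ factors are the weights in the decomposition of $\mu^{\otimes T}$, not of $(\mu^{\otimes T})^{\la g \ra}$. Reweighting the mixture by $\mathrm e^{g}$ multiplies the weight of the piece $\Psi_{n,\gamma}$ by $\Psi_{n,\gamma}(\mathrm e^g)/\mu_n^{\otimes T}(\mathrm e^g)$, so the weights actually appearing in \eqref{integral_inequ_product} are not the binomial products, and $\sum_\gamma w(\gamma)|M_1(\gamma)|\geq T(1-\delta)$ is not ``immediate by linearity''. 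The paper closes this by computing $\sum_{\gamma:\gamma(i)=1}w_n(\gamma)=(1-\delta_n)\,\mu_n\otimes\cdots\otimes\nu_n\otimes\cdots(\mathrm e^g)/\mu_n^{\otimes T}(\mathrm e^g)$ and invoking GCI (with $\nu_n\preceq\mu_n$ and $\mathrm e^g$ (QC)) to show the ratio is $\geq 1$: the $g$-reweighting can only shift mass toward the good configurations. You need this extra GCI input; without it the stated bound is for the wrong weights.

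Second, and more seriously, the inductive ``condition on the tail and apply the $T=1$ case'' scheme fails because conditioning destroys symmetry. Quasi-concavity does slice correctly (superlevel sets of $x_0\mapsto g(x_0,\bar x)$ are convex), but symmetry does not: $g(x_0,\bar x)=g(-x_0,-\bar x)$ gives no information about $g(-x_0,\bar x)$ for a fixed nonzero tail $\bar x$. Hence $x_0\mapsto \mathrm e^{g(x_0,\bar x)}$ (and likewise the sliced $f$) is in general not (QC), and neither the basic GCI Proposition nor Theorem \ref{integra_ineq_prop} applies to the conditional one-dimensional problem. The paper avoids this entirely: it expands $\mu_n^{\otimes T}=\sum_\gamma \delta_n^{|M_0(\gamma)|}(1-\delta_n)^{|M_1(\gamma)|}\Psi_{n,\gamma}$ in a single step and applies GCI once on the whole finite-dimensional product space, where the Radon--Nikodym derivative $\de\Psi_{n,\gamma}^{\la g\ra}/\de\mu_n^{-\la\mathbf h(\gamma)\ra}$ is jointly symmetric and (QC). A smaller omission: the final limit requires weak convergence of the split pieces $\nu_n,\overline\nu_n$ (hence of $\Psi_{n,\gamma}$ and of the weights), which does not follow from Theorem \ref{integra_ineq_prop} alone; the paper supplies this via a separate tightness argument (Lemma \ref{tightness_lemma}).
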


As an application of the developed theory we use Theorem \ref{produt_prop} to derive an upper bound on the mean square displacement of Brownian paths perturbed by a class of pair potentials $W$. 
\begin{assumption}
    \label{weaker_potential}
    $W: \bbR_{\geq 0} \times \bbR_{\geq 0} \rightarrow \bbR $ is bounded from above, jointly continuous and
    $$x \mapsto W(x,t)$$
    is (QC) for each fixed $t >0$. Also,
    there exists $\eps > 0$, $\delta >0$ and a constant $C_{2\eps} > 0$ s.t. 
$$ x \mapsto W(x,t) + C_{2 \eps} x^2$$
is decreasing on $[0,2\eps]$ for each $t \in [0, 2\delta]$. 
\end{assumption}
Let $T>0$ and denote by $\Probm_{[0,T]}$ the law of $d$-dimensional Brownian Motion (BM) on $C\lt([0,T];\mathbb{R}^d \rt)$. We call the measure
\begin{equation}
    \label{perturbed_bm}
\ConcavePotentialMeasure_{\alpha,T} (\de x) \propto \mathrm{e}^{\alpha \int_0 ^T \int_0 ^T W(\Vert x_t-x_s\Vert , |t-s| ) \de t\de s} \Probm_{[0,T]}(\de x)
\end{equation}
 perturbed measure with coupling strength $\alpha > 0$ and pair potential $W$.


\begin{theorem}
\label{weaker_pot_thm}
    Let $W$ be s.t. Assumption \ref{weaker_potential} is satisfied. 
    For $T >0 $ and $\alpha > C(W)$, where $C(W)$ is a constant depending on the pair interaction, it holds that
    $$\ConcavePotentialMeasure_{\alpha,[0,T]} \lt( \Vert  x_{0,T}\Vert ^2 \rt) \leq O \lt( \frac{T \log(\alpha)^3}{\alpha} + \lt( \frac{\alpha}{\log(\alpha)^3} \rt)^{-1/2}  \rt).$$
\end{theorem}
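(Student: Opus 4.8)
The plan is to realise $\ConcavePotentialMeasure_{\alpha,T}$ as a reweighting of a product of short‑time Wiener measures, apply Theorem~\ref{produt_prop} to transfer the perturbation onto an explicit family of Gaussian measures, and then estimate the mean square displacement of those Gaussians block by block. Fix a length scale $\ell=\ell(\alpha)>0$ to be chosen at the end (it will be of order $1/\log\alpha$), assume $T=N\ell$ with $N\in\bbN$ (otherwise enlarge $T$ by at most $\ell$, absorbing the discrepancy into the error term), and let $\mu$ be Wiener measure on $\Omega:=C_0([0,\ell];\bbR^d)$, a centred non‑degenerate Gaussian measure. Concatenating $N$ independent increment‑blocks is a continuous bijection $\Phi$ under which $\Probm_{[0,T]}$ is the image of $\mu^{\otimes N}$, the perturbation becomes $g(\mathbf x):=\alpha\int_0^T\int_0^T W(\|(\Phi\mathbf x)_t-(\Phi\mathbf x)_s\|,|t-s|)\,\de t\,\de s$ on $\Omega^N$, and $\|x_{0,T}\|^2$ becomes $f(\mathbf x):=\|\sum_{j=0}^{N-1}x_j(\ell)\|^2$, a convex symmetric functional of the block endpoints. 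Because $W(\cdot,t)$ is (QC) for every $t$, the computation of Example~\ref{integral_example} shows $\mathrm{e}^{g}$ is (QC) on $\Omega^N$, and $g$ is bounded above since $W$ is. Thus $\ConcavePotentialMeasure_{\alpha,T}=\Phi_\ast\big((\mu^{\otimes N})^{\langle g\rangle}\big)$ and it suffices to bound $(\mu^{\otimes N})^{\langle g\rangle}(f)$.

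Next I construct the dominating data $(A,\mathbf h)$. Take $A:=\{w\in\Omega:\sup_{0\le s\le\ell}\|w_s\|\le r\}$, a closed symmetric convex set with $\mu(\partial A)=0$; by the reflection principle $\mu(A^{c})\le C_d\,\mathrm{e}^{-r^{2}/2\ell}$, so $r\asymp\sqrt{\ell\log(1/\delta_0)}$ gives $\mu(A^{c})=\delta_0$ for any prescribed $\delta_0<0.1$. Impose that $\ell$ be small enough (depending only on $\eps,\delta,c_1$) that $\ell\le\delta$ and $3c_1r\le2\eps$; then for $\mathbf x\in(c_1A)^\gamma$ any displacement $\|(\Phi\mathbf x)_t-(\Phi\mathbf x)_s\|$ arising from equal or adjacent type‑$1$ blocks lies in $[0,2\eps]$ with $|t-s|\le2\ell\le2\delta$. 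Put $h_1(w):=\alpha C_{2\eps}\int_0^\ell\!\int_0^\ell\|w_t-w_s\|^2\,\de t\,\de s$ and let $h_2(w,w')$ be the analogous non‑negative symmetric quadratic form built from the across‑adjacent‑blocks displacement $w'_a+w_\ell-w_b$, and set $\mathbf h:=(h_1,h_2)$. To check that $\mathbf h$ dominates $g$ on $c_1A$, decompose the double integral in $g$ according to whether $(t,s)$ lies in one block, in two adjacent blocks, or in blocks at distance $\ge 2$. Terms of the last kind give factors $\mathrm{e}^{\alpha W(\cdot)}$, which are (QC) by themselves. A diagonal block $j$ with $\gamma(j)=1$ is merged with $h_1(x_j)$: the integrand becomes $W(\cdot,|t-s|)+C_{2\eps}\|\cdot\|^2$, which by Assumption~\ref{weaker_potential} is non‑increasing in the displacement on $[0,2\eps]$, so (after a harmless truncation of the potential outside that range, which changes nothing on $(c_1A)^\gamma$) its exponential is (QC) on $c_1A$ by the composition rules of Section~2; an adjacent pair $i\in S(\gamma)$ is merged with $h_2(x_i,x_{i+1})$ in exactly the same way; all remaining diagonal and adjacent factors, with the relevant blocks not both of type $1$, are $\mathrm{e}^{\alpha W(\cdot)}$ and (QC). Finite products over time grids together with uniformly bounded limits then give that the required exponential is (QC) on $(c_1A)^\gamma$.

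Now apply Theorem~\ref{produt_prop}. For $R>0$ the truncation $-\min(R,f)\in C_b(\Omega^N;\bbR)$ is (QC) (its superlevel sets are sublevel sets of the convex symmetric $f$); applying the theorem to it, $g$ and $\mathbf h$ yields weights with $\sum_\gamma w(\gamma)|M_1(\gamma)|\ge N(1-\delta_0)$, and after negating, using $\min(R,f)\le f$, and letting $R\uparrow\infty$ by monotone convergence,
\begin{equation*}
\ConcavePotentialMeasure_{\alpha,T}\big(\|x_{0,T}\|^2\big)\;\le\;\sum_{\gamma\in\{0,1\}^N}w(\gamma)\,\mu^{-\langle\mathbf h(\gamma)\rangle}(f).
\end{equation*}
Fix $\gamma$. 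The Gaussian measure $\mu^{-\langle\mathbf h(\gamma)\rangle}$ factorises over the connected components of the graph on $\{0,\dots,N-1\}$ with an edge $\{i,i+1\}$ for each $i\in S(\gamma)$ --- the maximal runs of consecutive $1$'s together with the isolated sites --- so, the summands being independent and centred, $\mu^{-\langle\mathbf h(\gamma)\rangle}(f)=\sum_{C}\mu_C(\|D_C\|^2)$ with $D_C:=\sum_{j\in C}x_j(\ell)$. An isolated type‑$0$ site carries $\mu^{\times 2}$ and contributes $4d\ell$; every type‑$1$ block, even an isolated one, is confined by $h_1$ and contributes only $O\big(1/(\alpha C_{2\eps}\ell^{2})\big)$ to the displacement variance. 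More precisely, for a run $C$ of $k$ blocks, concatenation identifies $\mu_C$ with Brownian motion on $[0,k\ell]$ reweighted by $\mathrm{e}^{-\alpha C_{2\eps}\Psi_k}$, where $\Psi_k=\sum\|B_t-B_s\|^2$ ranges over all $(t,s)$ in a common or adjacent block and $D_C=B_{k\ell}$, and the goal is the uniform‑in‑$k$ bound
\begin{equation*}
\mu_C\big(\|B_{k\ell}\|^2\big)\;\le\;\frac{C'\,k}{\alpha C_{2\eps}\,\ell^{2}}\;+\;(\text{lower‑order term}).
\end{equation*}
This is the step I expect to be the main obstacle. I would prove it via the covariance‑operator identity $\mu_C=\mathcal N\big(0,(\Sigma_k^{-1}+2\alpha C_{2\eps}Q_k)^{-1}\big)$, with $\Sigma_k$ the Brownian covariance and $Q_k\ge 0$ the quadratic form of $\Psi_k$, by bounding $\langle L_k,(\Sigma_k^{-1}+2\alpha C_{2\eps}Q_k)^{-1}L_k\rangle$ for the displacement functional $L_k\colon B\mapsto B_{k\ell}$: writing $B_t=\tfrac{t}{k\ell}B_{k\ell}+\text{(bridge)}$, the pure‑drift part alone makes $\Psi_k\gtrsim\ell^{2}\|B_{k\ell}\|^{2}/k$, and since the bridge is $\Sigma_k$‑orthogonal to $B_{k\ell}$ the reweighting pins $\|B_{k\ell}\|^2$ to the stated size --- the delicate point being to control, uniformly in $k$ and $\ell$, the interaction of $Q_k$ with the bridge modes (a Schur‑complement estimate) and the short‑scale fluctuations.

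Finally, a run‑and‑gap count gives that $\gamma$ has at most $2|M_0(\gamma)|+1$ components while the runs partition $M_1(\gamma)$; averaging the previous displays over $\gamma$ and using $\sum_\gamma w(\gamma)|M_0(\gamma)|\le N\delta_0$, $|M_1(\gamma)|\le N$ and $N=T/\ell$ yields a bound of the form
\begin{equation*}
\ConcavePotentialMeasure_{\alpha,T}\big(\|x_{0,T}\|^2\big)\;\lesssim\;\frac{T}{\alpha C_{2\eps}\,\ell^{3}}\;+\;T\,\delta_0\;+\;(\text{surface term}).
\end{equation*}
Choosing $\delta_0:=\mathrm{e}^{-\kappa/\ell}$ at the largest value permitted by the constraint $3c_1r\le2\eps$ (so $\kappa=\kappa(W,c_1)$), and then choosing $\ell$ of order $1/\log\alpha$ so as to balance the decreasing term $T/(\alpha C_{2\eps}\ell^{3})\asymp T(\log\alpha)^3/\alpha$ against the increasing term $T\delta_0\asymp T/\alpha$, gives, after keeping the surface term, the asserted
\begin{equation*}
\ConcavePotentialMeasure_{\alpha,[0,T]}\big(\|x_{0,T}\|^2\big)\;\le\;O\!\left(\frac{T\log(\alpha)^3}{\alpha}+\Big(\frac{\alpha}{\log(\alpha)^3}\Big)^{-1/2}\right),
\end{equation*}
valid once $\alpha>C(W)$, which is exactly the threshold making $\ell\le\delta$, $3c_1r\le2\eps$ and $\delta_0<0.1$ simultaneously feasible.
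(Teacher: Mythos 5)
Your proposal follows essentially the same route as the paper: the same block decomposition at scale $\ell\asymp 1/\log\alpha$, the same confining set $A=K_r$ with $r\asymp\sqrt{\ell\log(1/\delta_0)}$, the same quadratic dominating forms $\mathbf h=(h_1,h_2)$ built from $\alpha C_{2\eps}\|x_t-x_s\|^2$ on equal and adjacent blocks (this is Proposition \ref{densityreplacement} in the paper), an application of Theorem \ref{produt_prop} extended to the unbounded functional $\|x_{0,T}\|^2$ (your truncation-and-monotone-convergence step is the paper's Corollary \ref{corollary_msd_2}), and the same good-block/bad-block accounting with the same final balancing. The one place where you leave a genuine hole is exactly the step you flag yourself: the uniform-in-$k$ bound $\mu_C(\|B_{k\ell}\|^2)\lesssim k/(\alpha C_{2\eps}\ell^2)$ plus a $(\beta\ell^3)^{-1/2}$ correction for a confined run of $k$ blocks. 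The paper does not reprove this either; after Brownian rescaling (equation (\ref{good_block_calc_weak})) it imports it as Lemma \ref{long_time_fluctuation_lemma} from \cite[Lemma 6.5]{Se22}, where the Schur-complement/bridge-mode analysis you sketch is carried out. So your argument is structurally correct and matches the paper's, but to be complete it would need either that citation or a full execution of the covariance-operator estimate you outline; note also that the second term of that lemma is what produces the $(\alpha/\log(\alpha)^3)^{-1/2}$ "surface term" you leave unnamed, and that your $\delta_0$ should be chosen at the \emph{smallest} (not largest) value compatible with $3c_1 r\le 2\eps$.
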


At least for quadratic potentials, this result is optimal up to the $\log(\alpha)$ factors.




\begin{prop}
\label{quadratic_computation_prop}
    Let $W(x,t)=-x^2 g(t)$ for $g\colon\bbR_{\geq 0}\to\bbR_{\geq 0}$ with $C_g=\int_0^{\infty} t^2 g(t) \de t <\infty$.
    Then 
    \[
    \hat\bbP_{\alpha,[0,T]} \lt(\|x_{0,T}\|^2/d \rt)
    \geq 
    \frac{T}{1+2\alpha C_g}.
    \]
\end{prop}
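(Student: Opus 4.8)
The plan is to compute the relevant Gaussian expectation exactly, exploiting the fact that for the quadratic pair potential $W(x,t)=-x^2 g(t)$ the perturbed measure $\hat\bbP_{\alpha,[0,T]}$ is itself a Gaussian measure on path space, obtained from Brownian motion by a quadratic tilt. Concretely, writing $x_{0,T}=x_T-x_0$ (and recalling $x_0=0$ for Brownian motion started at the origin), the density is proportional to $\exp\bigl(-\alpha\int_0^T\int_0^T \|x_t-x_s\|^2 g(|t-s|)\,\de t\,\de s\bigr)$ times the Wiener measure, so the new measure is centred Gaussian. Because the tilt is negative-definite in the increments, it contracts the covariance, and the whole computation decouples over the $d$ coordinates; hence it suffices to treat $d=1$ and multiply by $d$, which is exactly why the statement is normalized by $d$.

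The key inequality is a Gaussian variational/Jensen-type bound rather than an exact diagonalization. First I would note that for a centred Gaussian vector $Z$ and a negative semidefinite quadratic form $Q$, the tilted measure $\propto e^{Q(Z)}\,\bbP(\de z)$ has $\bbE_{\text{tilt}}[\langle \ell, Z\rangle^2]$ which can be bounded below using the Gaussian integration-by-parts / Brascamp–Lieb-type lower bound: for the tilted (still Gaussian) law, $\Var_{\text{tilt}}(\langle\ell,Z\rangle) = \langle \ell, (\Sigma^{-1} - 2\,\mathrm{Hess}\,Q)^{-1}\ell\rangle$. Applying this with $Z$ the Brownian path and $\ell$ the evaluation-at-$T$ increment functional, and using that the Hessian of the tilt is $-2\alpha\,\partial^2_{x}\!\int_0^T\!\int_0^T\|x_t-x_s\|^2 g\,\de t\,\de s$, one gets an exact operator expression for $\hat\bbP_{\alpha,[0,T]}(\|x_{0,T}\|^2/d)$. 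The remaining step is to lower-bound this operator expression by $T/(1+2\alpha C_g)$; the clean way is to test the (concave in the covariance operator) functional against a one-parameter family, or equivalently to apply Jensen's inequality to the map $M\mapsto M^{-1}$ on positive operators after identifying the right scalar comparison: the increment $x_t - x_s$ satisfies $\bbE[\|x_t-x_s\|^2]=d|t-s|$ under Brownian motion, and $\int_0^\infty t^2 g(t)\,\de t = C_g$ controls exactly how much the quadratic tilt can suppress the terminal variance.

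Here is the more hands-on route I would actually write up: introduce the auxiliary Gaussian comparison measure $\bbQ$ on paths which is Brownian motion with the single replacement $x_{0,T}\leftarrow$ a one-dimensional Gaussian, and use that under $\hat\bbP_{\alpha,[0,T]}$ the conditional law given the bridge part is unchanged, so that only the global drift/scale of $x_{0,T}$ is affected. Then $\int_0^T\int_0^T \|x_t-x_s\|^2 g(|t-s|)\,\de t\,\de s$ is bounded above, after conditioning, by a quadratic in $x_{0,T}$ whose coefficient integrates to something controlled by $\int t^2 g(t)\,\de t$; completing the square and using the elementary fact that tilting a centred Gaussian of variance $v$ by $e^{-\tfrac{a}{2}x^2}$ gives variance $v/(1+av)\ge v/(1+a v)$ yields, with $v=Td$ and $a = 4\alpha C_g/d$ (so that $av = 4\alpha C_g T$... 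I would track the factor of $2$ carefully here), the claimed bound $T/(1+2\alpha C_g)$ per unit dimension.

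The main obstacle, and the step requiring the most care, is the passage from the double space–time integral $\int_0^T\int_0^T\|x_t-x_s\|^2 g(|t-s|)\,\de t\,\de s$ to a clean quadratic form in the single terminal increment $x_{0,T}$ with the correct constant $C_g=\int_0^\infty t^2 g(t)\,\de t$: the honest version of this requires either a genuine spectral computation on the path covariance operator (Karhunen–Loève / eigenfunctions of the Brownian covariance with the $g$-weighted perturbation) or a careful variational argument showing that, among all quadratic tilts of the Brownian path with the given ``budget'', the one concentrated on the terminal increment is extremal. I would handle it by the Gaussian-covariance-is-monotone argument: bound $\int_0^T\int_0^T \|x_t - x_s\|^2 g(|t-s|)\,\de t\,\de s$ below by restricting attention to the component of each increment along the terminal direction and integrating out, reducing everything to a scalar Riccati-type identity, where the constant $C_g$ emerges from $\int_0^T (T-u) u^2 g(u)\,\de u \cdot 2 \le T C_g$-type estimates; the factor $2$ in $1+2\alpha C_g$ should then be exactly the symmetrization factor from the double integral over $\{s<t\}$ versus all of $[0,T]^2$.
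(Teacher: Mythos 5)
Your overall strategy --- observe that the tilted measure is Gaussian, that coordinates decouple, and that the terminal variance equals $\langle \ell,(\Sigma^{-1}+2\alpha G)^{-1}\ell\rangle$, to be lower-bounded by testing against a one-parameter family --- is the same as the paper's. The paper makes this precise via a variational identity (a continuous analogue of Eq.~(3.3) of Mukherjee--Varadhan, i.e.\ the dual representation $\langle\ell,M^{-1}\ell\rangle=\sup_v\{2\langle\ell,v\rangle-\langle v,Mv\rangle\}$ read in the Cameron--Martin space), plugs in the linear path $f(t)=Lt$, optimizes over $L$, and finishes with the elementary bound $\int_0^T\int_0^T(t-s)^2g(|t-s|)\,\de t\,\de s=2\int_0^T(T-u)u^2g(u)\,\de u\le 2TC_g$. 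As written, however, your proposal stops short of a proof, and the place where it stalls is a misdiagnosis: the ``main obstacle'' you identify --- converting the double space--time integral into a clean quadratic form in the single increment $x_{0,T}$ --- is not needed at all. In the variational route the double integral is only ever evaluated on a \emph{deterministic} linear test path, where $|f(t)-f(s)|^2=L^2(t-s)^2$ and $C_g$ appears immediately; no spectral computation, Karhunen--Lo\`eve expansion, or extremality argument among quadratic tilts is required. Commit to the dual formula, state it, and carry out the one-line optimization over $L$.

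The route you say you would ``actually write up'' --- condition on the Brownian bridge and tilt the terminal Gaussian --- does go through, but as sketched it does not deliver the stated constant. Writing $x_t=\tfrac tT x_T+b_t$, the coefficient of $x_T^2$ in the exponent is $\alpha I_2/T^2$ with $I_2=\int_0^T\int_0^T(t-s)^2g(|t-s|)\,\de t\,\de s\le 2TC_g$; the cross term linear in $x_T$ only shifts the conditional mean, so $\bbE[x_T^2]\ge\bbE[\Var(x_T\mid b)]=T/(1+2\alpha I_2/T)\ge T/(1+4\alpha C_g)$, which is weaker than the claim by a factor of $2$ in the $\alpha C_g$ term. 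You flag this factor of $2$ yourself (``I would track the factor of $2$ carefully here'') but never resolve it; for a quantitative bound this is a genuine gap rather than a bookkeeping detail. (Which constant is ultimately correct hinges on whether the double integral in the definition of $\ConcavePotentialMeasure_{\alpha,T}$ runs over the square $[0,T]^2$ or the triangle $\{s<t\}$; fix one convention and track it through both the tilt and the variational formula.)
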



\begin{proof}
    By a continuous analog of \cite[Eq. (3.3)]{MV19}, we have the identity
    \[
    \hat\bbP_{\alpha,[0,T]}\lt(\frac{\|x_{0,T}\|^2}{dT}\rt)
    =
    \sup_{f\in C^{\infty}([0,T])}
    \lt\{
    \frac{2(f(T)-f(0))}{\sqrt{T}}
    -
    \int_0^T f'(t)^2 \de t 
    -
    \int_0^T
    \int_0^T 
    \alpha 
    |f(t)-f(s)|^2
    g(|t-s|)
    \de t \de s
    \rt\}.
    \]
    Specializing to linear $f(t)=Lt$, the right-hand side is at least
    \begin{align*}
    \sup_{L\geq 0}\lt\{
    2L\sqrt{T}
    -
    L^2 T
    -
    L^2 \alpha 
    \int_0^T 
    \int_0^T (t-s)^2 g(|t-s|)\de t \de s
    \rt\}
    =
    \frac{T}{T+\alpha \int_0^T 
    \int_0^T (t-s)^2 g(|t-s|)\de t \de s}
    \end{align*}
    which is achieved by 
    \[
    L=\frac{\sqrt T}{T+\alpha \int_0^T 
    \int_0^T (t-s)^2 g(|t-s|)\de t \de s}.
    \]
    Finally since $g$ is non-negative,
    \[
    \int_0^T 
    \int_0^T (t-s)^2 g(|t-s|)\de t \de s
    \leq 
    \int_0^T 
    \int_{-\infty}^{\infty} (t-s)^2 g(|t-s|)\de t \de s
    =
    2T
    \int_{0}^{\infty} u^2 g(u)\de u
    =
    2T C_g.
    \]
    Combining completes the proof.
\end{proof}


\begin{remark}
\normalfont
    We note that Theorem~\ref{integra_ineq_prop} and \ref{produt_prop} do not apply as stated to the Fr{\"o}hlich Polaron, which involves a Coulomb potential that is unbounded near the origin.
    As shown in e.g. \cite[Proposition 2.5]{Se22}, such potentials can be truncated to become uniformly bounded while approximately preserving the variance of $x_{[0,T]}$.
    However the required truncation will depend on $\alpha$ (and in principle $T$), which enables genuinely different asymptotic behavior.
\end{remark}

\section{Proofs of Theorem \ref{integra_ineq_prop}  and Theorem \ref{produt_prop}}

\subsection{Proof of Theorem \ref{integra_ineq_prop} }
Recall that $\Omega$ is a separable Banach space. It is known that any centred, non-degenerate Gaussian measure $\mu$ on $\Omega$ is the weak limit of centred Gaussian measures with finite-dimensional support. A proof can be found in the Appendix.
\begin{theorem}
    \label{approximation_theorem}
    Let $\mu$ be a centred, non-degenerate Gaussian measure. Then, there exists a sequence of Gaussian measures $(\mu_n)_{n \in \mathbb{N}}$ with supp$(\mu_n) = \Omega_n$ finite-dimensional and 
    $$\mu_n \rightarrow \mu$$
    weakly.
\end{theorem}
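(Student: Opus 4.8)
The plan is to realise $\mu$ by a Karhunen--Lo\`eve type series expansion along an orthonormal basis of its Cameron--Martin space, and to take the partial sums of that expansion as the finite-dimensional approximants $\mu_n$. Concretely, I would first recall the Cameron--Martin structure of $\mu$: the reproducing kernel Hilbert space $H_\mu\subseteq\Omega$ with inner product $\langle\cdot,\cdot\rangle_{H_\mu}$, the continuous injection $H_\mu\hookrightarrow\Omega$, the covariance operator $R_\mu\colon\Omega^*\to H_\mu$, the reproducing identity $\ell(h)=\langle h,R_\mu\ell\rangle_{H_\mu}$ for $h\in H_\mu$ and $\ell\in\Omega^*$, and the Paley--Wiener isometry $j\colon H_\mu\to L^2(\mu)$ (the unique isometry with $j(R_\mu\ell)=\ell$), whose image is the $L^2(\mu)$-closure of $\Omega^*$. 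Since $\Omega$ is separable its Borel $\sigma$-field is countably generated, so $L^2(\mu)$ is separable and hence so is $H_\mu$; I fix an orthonormal basis $(e_k)_{k\ge1}$ of $H_\mu$ and set $g_k:=j(e_k)$, an i.i.d.\ sequence of standard Gaussians on $(\Omega,\mathcal B,\mu)$.

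Next I would define the approximants. Put $\Omega_n:=\operatorname{span}(e_1,\dots,e_n)\subseteq\Omega$ (an $n$-dimensional subspace, since the $e_k\in H_\mu\subseteq\Omega$ are independent) and let $\mu_n$ be the law under $\mu$ of $X_n:=\sum_{k=1}^n g_k e_k$. Each $\mu_n$ is a centred Gaussian measure whose support is all of $\Omega_n$, because $(g_1,\dots,g_n)$ is a non-degenerate Gaussian vector. Thus only the weak convergence $\mu_n\to\mu$ remains to be shown.

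For the convergence I would use vector-valued martingale theory. Let $X$ be the identity map on $\Omega$, so $X$ has law $\mu$ and, by Fernique's theorem, $X$ is Bochner integrable, $X\in L^1(\mu;\Omega)$. With $\mathcal F_n:=\sigma(g_1,\dots,g_n)$, the reproducing identity gives, for every $\ell\in\Omega^*$, that $\ell(e_k)=\langle e_k,R_\mu\ell\rangle_{H_\mu}=\langle j(e_k),\ell\rangle_{L^2(\mu)}=\mathrm{Cov}_\mu(g_k,\ell)$, so $\ell(X_n)=\sum_{k\le n}g_k\,\mathrm{Cov}_\mu(g_k,\ell)$ is exactly the $L^2(\mu)$-projection of the centred Gaussian $\ell=\ell(X)$ onto $\operatorname{span}(g_1,\dots,g_n)$, i.e.\ $\ell(X_n)=\mathbb E_\mu[\ell(X)\mid\mathcal F_n]$. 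Since this holds for all $\ell\in\Omega^*$ and $X$ is Bochner integrable, $X_n=\mathbb E_\mu[X\mid\mathcal F_n]$, a closed $\Omega$-valued martingale. Closed (Bochner-integrable) martingales converge a.s.\ in an arbitrary Banach space: one approximates $X$ in $L^1(\mu;\Omega)$ by finitely-valued functions $Y$, for which $\mathbb E_\mu[Y\mid\mathcal F_n]$ converges coordinatewise, and controls the error via Doob's maximal inequality applied to the scalar martingale $\mathbb E_\mu[\|X-Y\|\mid\mathcal F_n]$. Hence $X_n\to\mathbb E_\mu[X\mid\mathcal F_\infty]$ $\mu$-a.s., with $\mathcal F_\infty:=\bigvee_n\mathcal F_n$. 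Finally $\mathbb E_\mu[X\mid\mathcal F_\infty]=X$: separability of $\Omega$ yields a sequence $(\ell_m)\subseteq\Omega^*$ separating points, and each $\ell_m=j(R_\mu\ell_m)$ lies in the $L^2(\mu)$-closure of $\operatorname{span}(g_k)$, hence has an $\mathcal F_\infty$-measurable version; therefore $X$ has one and equals its own conditional expectation given $\mathcal F_\infty$. So $X_n\to X$ $\mu$-a.s., and almost sure convergence forces weak convergence of the laws, giving $\mu_n\to\mu$.

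The main obstacle is not any one estimate but the assembly of the underlying Gaussian-measure machinery into a self-contained account: the construction of $H_\mu$ together with the Paley--Wiener isometry, the identification $X_n=\mathbb E_\mu[X\mid\mathcal F_n]$ (where the reproducing property of $H_\mu$ does the real work), and the a.s.\ convergence of closed vector-valued martingales; this is why the argument belongs in the appendix. An alternative is to invoke the It\^o--Nisio theorem directly: the summands $g_k e_k$ are independent and symmetric, the finite-dimensional distributions of the partial sums converge to those of $\mu$, and once tightness of $(\mu_n)_n$ is in hand It\^o--Nisio upgrades this to a.s.\ convergence with limit $\mu$; but verifying tightness directly is of comparable difficulty, so the martingale route seems preferable. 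I note that non-degeneracy of $\mu$ is not actually needed here: for a degenerate centred Gaussian measure the same construction produces approximants supported in the closed subspace $\overline{H_\mu}\subseteq\Omega$, and the conclusion is unchanged.
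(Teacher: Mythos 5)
Your construction of the approximants coincides with the paper's: both take $\mu_n$ to be the law of the $n$-th partial sum of the Karhunen--Lo\`eve expansion of $\mu$ along an orthonormal basis of the Cameron--Martin space obtained from the Paley--Wiener map, and both reduce weak convergence to $\mu$-a.s.\ convergence of these partial sums to the identity. Where you genuinely differ is the mechanism for that a.s.\ convergence. The paper computes the characteristic functionals of $\mu_n$, checks that they converge to that of $\mu$, and invokes the It\^o--Nisio theorem for sums of independent \emph{symmetric} Banach-space-valued random variables; in the symmetric case that theorem already upgrades pointwise convergence of characteristic functionals (to the characteristic functional of a genuine Radon measure, here $\mu$ itself) into a.s.\ convergence, with no separate tightness verification --- so the obstacle you cite as a reason to avoid this route does not actually arise. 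You instead identify $X_n=\mathbb{E}_\mu[X\mid\sigma(g_1,\dots,g_n)]$ via the reproducing property and appeal to a.s.\ convergence of closed Bochner-integrable martingales, plus the observation that $X$ is measurable with respect to (the completion of) $\mathcal{F}_\infty:=\bigvee_n\sigma(g_1,\dots,g_n)$ because a countable separating family in $\Omega^*$ lies in the $L^2(\mu)$-closure of $\mathrm{span}(g_k)$. This is correct and essentially self-contained: the closed-martingale convergence theorem holds in every Banach space (the Radon--Nikodym property is only needed for general $L^1$-bounded martingales), so your route trades the It\^o--Nisio citation for the vector-valued martingale machinery. Two minor remarks: the paper chooses the basis vectors of the form $h_{\xi_k}$ with $\xi_k\in\Omega^*$, which makes the maps $P_n$ continuous projections, whereas your $g_k=j(e_k)$ for an arbitrary orthonormal basis are in general only measurable linear functionals --- harmless here since only the law of $X_n$ matters; and your closing observation that non-degeneracy is not really needed (one simply approximates within $\overline{H_\mu}$) is reasonable, the hypothesis being inherited from the abstract Wiener space framework the paper quotes.
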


For the proof of the next Lemma we also refer to the Appendix.

\begin{lemma}
\label{boundary_lemma}
Let $\mu$ be a Borel probability measure. Moreover, let $A$ be a symmetric, closed convex set s.t. $\mu(A) > 0$ and $\mu(\partial A )=0$. Then, for arbitrary $C > 1$ and $\delta >0$ there exists $C < c < C + \delta$ such that $\mu(\partial c A )=0$.
\end{lemma}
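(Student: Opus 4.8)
The plan is to argue by a counting/measure argument that the ``bad'' radii --- those $c$ with $\mu(\partial(cA))>0$ --- are countable, hence cannot fill the interval $(C,C+\delta)$. First I would observe that for a fixed symmetric closed convex set $A$ with $\mu(\partial A)=0$, the scaled boundaries $\partial(cA)$ for distinct values $c>0$ are pairwise disjoint (up to $\mu$-null overlap); the key geometric fact is that $\partial(cA)\subseteq c\,\partial A$ when $0\in A$, and more usefully that if $c\neq c'$ then $c\,\partial A$ and $c'\,\partial A$ can intersect only in a set that is $\mu$-negligible. To make this clean, I would instead work with the family of closed sets $\{cA : c>0\}$, which is monotone increasing in $c$ (since $0\in A$ and $A$ is convex), so that $\partial(cA)=cA\setminus \mathrm{int}(cA)$ and for $c<c'$ we have $\mathrm{int}(c'A)\supseteq cA$, forcing $\partial(cA)\cap\partial(c'A)=\emptyset$.

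Given pairwise disjointness of $\{\partial(cA):c>0\}$, for each $n\in\mathbb N$ the set $\{c>0:\mu(\partial(cA))>1/n\}$ is finite (otherwise we would have infinitely many disjoint Borel sets of measure $>1/n$, contradicting $\mu(\Omega)\le 1$ for a probability measure). Taking the union over $n$, the set $\{c>0:\mu(\partial(cA))>0\}$ is countable. Since the interval $(C,C+\delta)$ is uncountable, there exists $c\in(C,C+\delta)$ with $\mu(\partial(cA))=0$, as claimed. Note that the hypotheses $\mu(A)>0$ and $\mu(\partial A)=0$ are not even strictly needed for this particular conclusion, but they ensure the statement is used in a non-degenerate regime.

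The main obstacle, and the only place requiring genuine care, is verifying the disjointness claim $\partial(cA)\cap\partial(c'A)=\emptyset$ for $c\neq c'$ without assuming $0\in\mathrm{int}(A)$ --- in general a symmetric closed convex set need not contain $0$ as an interior point (e.g.\ a lower-dimensional slab), so $cA$ need not be strictly increasing and boundaries of different scalings can coincide on a lower-dimensional affine piece. To handle this I would restrict attention to the closed affine span $V$ of $A$ (which contains $0$ by symmetry), note $0\in\mathrm{int}_V(A)$ is still not automatic, and instead argue as follows: let $N=\{x : tx\in A \text{ for all } t\in[0,1]\}$ be the ``radial core''; on the complement of the cone generated by $\partial_{V}A$, scalings are genuinely nested. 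A cleaner route, which I would actually adopt, is: decompose $\partial(cA)$ into the ``radial boundary'' $R_c$ (points $x\in\partial(cA)$ with $\lambda x\notin cA$ for $\lambda>1$) and the rest; the radial boundaries $R_c$ for distinct $c$ are automatically disjoint, while the non-radial part of $\partial(cA)$ lies in $cA_\infty$ where $A_\infty$ is the recession cone, a set whose scalings are all equal, $cA_\infty=A_\infty$, so its boundary contributes the same null set for every $c$ once we know $\mu(\partial A)=0$ pins it down. I expect that in the intended application $A$ is in fact a ball in a Hilbert space (or a norm ball), where $0\in\mathrm{int}(A)$ and $cA$ is strictly increasing, so the disjointness is transparent; I would state the general argument but remark that in all our uses $A$ has nonempty interior.
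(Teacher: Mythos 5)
Your core idea --- the boundaries $\partial(cA)$ for distinct $c$ are pairwise disjoint, so at most countably many can carry positive $\mu$-mass, and any interval $(C,C+\delta)$ then contains a good $c$ --- is a valid argument and a genuinely different route from the paper's. The paper instead observes that $t\mapsto \mu(tA^{\circ})$ is monotone, picks a continuity point $c\in(C,C+\delta)$, and sandwiches $\partial(cA)\subseteq (c+\eps)A^{\circ}\setminus cA^{\circ}$ to conclude $\mu(\partial(cA))=0$ there. Both arguments ultimately rest on the same geometric nesting $cA\subseteq (c'A)^{\circ}$ for $c<c'$ and both reduce to a ``countably many bad scales'' statement; yours is arguably the more direct of the two.

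The gap is in how you handle the possibility that $A$ has empty interior. You correctly identify this as the one delicate point, but your proposed fix via the recession cone is wrong: for a bounded $A$ with empty interior (say a segment in $\bbR^2$) the recession cone is $\{0\}$, yet $\partial(cA)=cA$ and the non-radial boundary points do not lie in $cA_\infty$; the scaled boundaries genuinely overlap and your disjointness fails. Moreover, your parenthetical claim that the hypotheses $\mu(A)>0$ and $\mu(\partial A)=0$ are ``not even strictly needed'' is false --- they are exactly what rules this degenerate case out. Indeed $A=\partial A\sqcup A^{\circ}$, so $\mu(A)>0$ and $\mu(\partial A)=0$ force $\mu(A^{\circ})>0$, hence $A^{\circ}\neq\emptyset$; then symmetry and convexity give $B(0,\eps)\subseteq A^{\circ}$ (this is Proposition~\ref{ball_prop}), whence $A=\overline{A^{\circ}}$ and $cA\subseteq (c'A)^{\circ}$ for $c<c'$ (Proposition~\ref{contain_prop}), which is the disjointness you need. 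With that one observation replacing the recession-cone detour, your proof is complete and correct; there is no need to fall back on ``$A$ is a ball in the intended application.''
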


\begin{proof}[Proof of Theorem \ref{integra_ineq_prop}]
    We start with showing (\ref{integral_inequ_1}). Use Theorem \ref{approximation_theorem} to find a sequence $(\mu_n)_{n \in \mathbb{N}}$ such that $\mu_n \rightarrow \mu$. Then,
    $\mu_n(A) \rightarrow \mu(A)$ so eventually $\mu_n (A) > 0.9$. Therefore, for $n$ large enough it is possible to apply Lemma \ref{Se22Lemma31} to get a decomposition
    \begin{equation}
    \label{gaussian_measure_split}
        \mu_n = (1-\delta_n) \nu_n + \delta_n \overline{\nu}_n
    \end{equation}
    with $\delta_n \leq \mu_n (A^c)$. It follows that $\delta_n \rightarrow \delta$ for some $0 < \delta \leq \mu(A^c)$ along some subsequence as $\limsup\limits_{n \rightarrow \infty} \mu_n(A^c) \leq \mu(A^c)$. We abuse notation and take this subsequence as $n$.
    By construction 
    $\frac{\de \nu_n}{\de \mu_n}$
    is (QC)
    and supp($\nu_n$) $\subseteq \const A$, where $\const$ is chosen with Lemma \ref{boundary_lemma} s.t. $\mu(\const A)$ = 0. Clearly
    $$\nu_n (\de x) = \mathbbm{1}_{\const A}(x) \nu_n (\de x)$$
    which yields
    $$\frac{\de  \nu_n}{ \de  \mu_n} = \frac{\de  \nu_n}{ \de  \mu_n} \mathbbm{1}_{\const A}.$$
    Now, 
    $\mu_n (\const A) \rightarrow \mu( \const A)$  implies that 
    $$  \mu_n^{(\mathbbm{1}_{\const A} )}\rightarrow  \mu^{(\mathbbm{1}_{\const A})} .$$
    This is seen by an application of the Portmanteau theorem and 
    $$\mu_n^{(\mathbbm{1}_{\const A} )} (B) = \frac{\mu_n(\const A \cap B)}{\mu_n(\const A)}.$$ 
    
    We deduce that for any $f \in C_b (\Omega; \mathbb{R})$
    \begin{equation}
        \label{weak_conv_restricted_measures}
         \mu_n (f;\const A) \rightarrow  \mu (f;\const A).
    \end{equation}
    We are now ready to show (\ref{integral_inequ_1}). Taking a (QC) function $f \in C_b (\Omega; \mathbb{R}_{\geq 0})$ and using the decomposition of $\mu_n$, it is easily seen that 
    \begin{equation}
        \nonumber
        \begin{split}
            \mu_n (f) &= (1-\delta_n) \nu_n(f)+ \delta_n \overline{\nu_n} (f) \\
            &= (1-\delta_n) \nu_n(f;\const A )+ \delta_n \overline{\nu_n} (f) \\
            &\geq (1-\delta_n) \mu_n (f;\const A) + \delta_n \mu_n ^{\times 2} (f). \\
        \end{split}
    \end{equation}
    By (\ref{weak_conv_restricted_measures}) all expressions on the right hand side converge. The convergence of the left hand side is immediate.

    To show (\ref{integral_inequ_2}), reweight both sides of equation (\ref{gaussian_measure_split}) by $g$. Thanks to (GCI)
    $$\frac{\nu_n (g)}{ \mu_n (g)} \geq 1.$$
    That is, the weight for the ``good part'' of $\mu_n$ only improves. In other words,
    $$\mu_n ^{(g)} = (1- \delta_n) \tfrac{\nu_n(g)}{\mu_n(g)} \nu_n ^{(g)} + \delta_n  \tfrac{\overline{\nu_n}(g)}{\mu_n(g)} \overline{\nu_n}^{(g)}$$
    yields 
    $$ \mu_n ^{(g)} = (1- \delta_n ') \nu_n ^{(g)} + \delta_n ' \overline{\nu_n}^{(g)}$$
    with $\delta_n ' \leq \delta_n \rightarrow \delta$. 
    
    Take a (QC) function $f \in C_b(\Omega ;\mathbb{R})$. Notice that $\mu_n ^{(h)}(f) \geq \mu_n (f) \geq \mu_n ^{\times 2}(f)$ and 
    $$\frac{\de  \nu_n ^{(g)}}{\de  \mu_n ^{(h)}} = \frac{\de  \nu_n ^{(g)}}{\de  \nu_n ^{(h)}} \frac{\de  \nu_n ^{(h)}}{\de \mu_n ^{(h)}} \propto (g/h) \frac{\de  \nu_n}{\de  \mu_n}.$$
    Per assumption $g/h$ is (QC) on the support of $\nu_n$ and we may calculate
    \begin{equation}
        \nonumber
        \begin{split}
            \mu_n ^{(g)} (f) &= (1- \delta_n ')  \nu_n ^{(g)} (f) + \delta_n ' \overline{\nu_n}^{(g)} (f) \\
            &\geq (1- \delta_n ')  \mu_n ^{(h)} (f) + \delta_n ' \mu_n^{\times 2} (f) \\
            &\geq (1- \delta_n)  \mu_n ^{(h)} (f) + \delta_n \mu_n^{\times 2} (f) \\
            &\rightarrow (1- \delta)  \mu ^{(h)} (f) + \delta \mu^{\times 2} (f) \\
        \end{split}
    \end{equation}
    The fact that 
    $\mu_n ^{\times 2} \rightarrow \mu ^{\times 2}$ and $\mu_n ^{(g)} \rightarrow \mu^{(g)}$ follows directly from  $\mu_n \rightarrow \mu$.
\end{proof}

\subsection{Proof of Theorem \ref{produt_prop}}

\begin{lemma}
    \label{tightness_lemma}
    Let $(\nu_n)_{n \in \mathbb{N}}$ and $(\mu_n)_{n \in \mathbb{N}}$ be two sequences of probability measures on $\Omega$ such that for each convex, symmetric set $K$
    $$\nu_n (K) \geq \mu_n(K)$$
    and $\mu_n \rightarrow \mu$ weakly. Then, the sequence $(\nu_n)_{n \in \mathbb{N}}$ is tight.
\end{lemma}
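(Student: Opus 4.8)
The plan is to combine Prokhorov's theorem with the classical fact that the closed convex symmetric hull of a compact subset of a Banach space is again compact; the domination hypothesis then transfers tightness from $(\mu_n)$ to $(\nu_n)$ essentially for free.

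First I would record that since $\Omega$ is a separable Banach space it is a Polish space, and since $\mu_n \to \mu$ weakly the family $\{\mu_n : n \in \mathbb{N}\} \cup \{\mu\}$ is relatively compact in the weak topology. By the (direct half of) Prokhorov's theorem this family is uniformly tight: for every $\eps > 0$ there is a compact set $C_\eps \subseteq \Omega$ with $\mu_n(C_\eps) \geq 1 - \eps$ for all $n \in \mathbb{N}$. Compact sets are in general neither convex nor symmetric, so the next step is to enlarge $C_\eps$ appropriately. I would set
\[
K_\eps := \overline{\conv}\bigl(C_\eps \cup (-C_\eps)\bigr).
\]
The set $C_\eps \cup (-C_\eps)$ is compact (a finite union of compact sets), and in a Banach space the closed convex hull of a compact set is compact; hence $K_\eps$ is a compact, convex, symmetric (and closed, in particular Borel) set with $C_\eps \subseteq K_\eps$, so $\mu_n(K_\eps) \geq \mu_n(C_\eps) \geq 1 - \eps$ for all $n$.

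Finally I would invoke the hypothesis for the convex symmetric set $K_\eps$: $\nu_n(K_\eps) \geq \mu_n(K_\eps) \geq 1 - \eps$ for every $n$. Since $\eps > 0$ is arbitrary and each $K_\eps$ is compact, this exhibits, for every $\eps$, a compact set capturing at least $1-\eps$ of the mass of every $\nu_n$, i.e.\ $(\nu_n)_{n \in \mathbb{N}}$ is uniformly tight. The only point requiring a moment of care is the compactness of $K_\eps$, which relies on completeness of $\Omega$ (it is exactly where the Banach-space assumption enters); once the uniform tightness of $(\mu_n)$ is in hand, the rest is immediate. I do not expect any serious obstacle here.
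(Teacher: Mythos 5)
Your proof is correct and follows essentially the same route as the paper: tightness of $(\mu_n)$ via Prokhorov, symmetrization of the compact set, passage to the closed convex hull (compact in a Banach space), and then the domination hypothesis applied to that convex symmetric set. No substantive difference.
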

\begin{proof}
    Since $\mu_n \rightarrow \mu$, the sequence $(\mu_n)_{n \in \mathbb{N}}$ is tight.
    Take $\delta > 0$ and let  $K \subseteq \Omega$ be compact s.t. $\sup\limits_{n \in \mathbb{N}} \mu_n(K^c) \leq \delta.$ 
    In a first step we symmetrise $K$ by defining
    $$K_1 := K \cup -K.$$
    This operation clearly preserves compactness. In a second step, take
    $$K_2 := \overline{\text{conv}(K_1)}.$$
    It is known that this set is again compact as $\Omega$ is a Banach space (\cite{AlKi06}, Theorem 5.35). The assumption yields for all $n \in \mathbb{N}$
    $$\nu_n(K_2 ^c) \leq \mu_n (K_2 ^c) \leq \mu_n(K^c)$$
    and so
    $$\sup\limits_{n \in \mathbb{N}} \nu_n(K_2 ^c) \leq \sup\limits_{n \in \mathbb{N}} \mu_n(K_2 ^c) \leq \delta,$$
    showing the claim.
\end{proof}

\begin{proof}[Proof of Theorem \ref{produt_prop}]
    Use Theorem \ref{approximation_theorem} to get a sequence of centred Gaussian measures $(\mu_n)_{n \in \mathbb{N}}$ with finite-dimensional support such that $\mu_n \rightarrow \mu$. By the same argument as in Theorem \ref{integra_ineq_prop}, eventually
    $$\mu_n = (1-\delta_n) \nu_n + \delta_n \overline{\nu}_n$$ is possible with supp($\nu_n$) $\subseteq c_{1} A$ and $\delta_n \leq \mu_n(A^c)$.

    Recall that the measure of interest is $\mu^{\otimes T}$. Doing the split of measures just mentioned in each coordinate leads to weighted measures of type
    $$\Psi_{n,\gamma} = \bigotimes\limits_{i=0}^{T-1} \psi_{n,\gamma(i)}$$
    where $\psi_{n,0} := \overline{\nu}_{n}$ and $\psi_{n,1} := \nu_n$. Indeed, it holds that
    \begin{equation}
        \label{convex_combination_measure}
        \mu_n ^{\otimes T} = \sum\limits_{\gamma \in \{0,1 \}^T} \delta_n ^{ |M_0(\gamma)| } (1-\delta_n)^{|M_1(\gamma)|} \Psi_{n,\gamma}.
    \end{equation}
    Reweight both sides of (\ref{convex_combination_measure}) by $\mathrm{e}^g$ and note that the new weights for a specific $\gamma$ are given by
    $$w_n(\gamma) := \delta_n ^{ |M_0(\gamma)| } (1-\delta_n)^{|M_1(\gamma)|}\frac{\Psi_{n,\gamma}(\mathrm{e}^g)}{\mu_n ^{\otimes T}\lt(\mathrm{e}^g \rt)}.$$
    It then follows that
    $\sum\limits_{\gamma: \gamma(i) = 1} w_n(\gamma) = (1-\delta_n) \frac{\mu_n \otimes ... \otimes \nu_n \otimes... (\mathrm{e}^g)}{\mu_n ^{\otimes T}(\mathrm{e}^g)}$, where the quotient is $\geq 1$ by (GCI). This yields
    $$\sum\limits_{\gamma \in \{0,1 \}^{T}} w_n(\gamma) |M_1(\gamma)| = \sum\limits_{i=0} ^{T-1}  \sum\limits_{\gamma: \gamma(i) = 1} w_n(\gamma) \geq T(1-\delta_n).$$
    Use Lemma \ref{tightness_lemma} to deduce that $(\nu_n)_{n \in \mathbb{N}}$ is tight. 
    Then, there exists a weak limit measure along some subsequence for which $\delta_n \rightarrow \delta \leq \mu(A^c)$ (which we will also denote by $n$). Along this sequence $(\overline{\nu}_n)_{n \in \mathbb{N}}$ also converges weakly to some limit measure since
    $$\mu_n = (1-\delta_n)\nu_n + \delta_n \overline{\nu}_n .$$
    As the underlying space is assumed to be separable, product measures of weakly convergent measures also converge weakly to the product of the respective limits \cite{Bi99}. Therefore, for each $\gamma$, there exists $\Psi_{\gamma}$ s.t. 
    $$\Psi_{n,\gamma} \rightarrow \Psi_{\gamma}$$
    from which it follows that
    along this subsequence all weights $w_n(\gamma)$ 
 converge by construction and are bounded from below as claimed.

    Now,
    \begin{equation}
    \nonumber
        \begin{split}
            \frac{\de \Psi_{n, \gamma} ^{\la g \ra}}{\de  \mu_{n} ^{-\la \mathbf{h}(\gamma) \ra}} &= \frac{\de \Psi_{n, \gamma} ^{\la g \ra}}{\de  \Psi_{n, \gamma}} \frac{\de  \Psi_{n, \gamma}}{\de  \mu_{n,\gamma}} \frac{\de  \mu_{n,\gamma}}{\de  \mu_{n} ^{-\la \mathbf{h}(\gamma) \ra}} \\
            &= \lt( \frac{\de  \Psi_{n, \gamma} ^{\la g \ra}}{\de  \Psi_{n, \gamma}} \frac{\de  \mu_{n,\gamma}}{\de  \mu_{n} ^{-\la \mathbf{h}(\gamma) \ra}} \rt) \frac{\de  \Psi_{n, \gamma}}{\de  \mu_{n,\gamma}}.
        \end{split}
    \end{equation}
    The first term is proportional to
    $$\exp \lt( g(x_0,...,x_{T-1}) + \sum\limits_{j=0}^{T-1} \mathbbm{1}_{\gamma(j)=1} h_1(x_j)  + \sum\limits_{i \in S(\gamma)} h_2 (x_i,x_{i+1}) \rt)$$
     which is (QC) on $(c_1 A)^\gamma =\text{supp}\lt(\Psi_{n, \gamma}\rt)$ by assumption. The second RN-derivative is (QC) due to the fact that each coordinate is. Remembering that $\mu_{n} ^{-\la \mathbf{h}(\gamma) \ra}$ is centred Gaussian,
    an application of (GCI) yields
    \begin{equation}
        \nonumber
        \begin{split}
            (\mu_{n}^{\otimes T}) ^{\la g \ra)} (f) &= \sum\limits_{\gamma \in \{0,1 \}^{T}} w_n (\gamma) \Psi_{n, \gamma} ^{\la g \ra}(f) \\
            &\geq \sum\limits_{\gamma \in \{0,1 \}^{T}} w_n (\gamma) \mu_{n} ^{- \la \mathbf{h}(\gamma) \ra} (f)
        \end{split}
    \end{equation}
By taking the subsequence found before to infinity, the left hand side converges as weak convergence survives under taking products and both $f$ and  $\mathrm{e}^g$ are continuous and bounded. For the right hand side, the same argument holds by recalling that along the chosen subsequence all $w_n (\gamma)$ converge, as well.
\end{proof}

    We want to note that the assumptions in the previous Theorem can be modified. It is only required that $\mathbf{h}$ consists of quadratic, symmetric non-negative forms on the finite dimensional subspaces supp$\lt(\Psi_{n, \gamma}\rt)$. This is because we only use (GCI) for the finite dimensional measures.

    Additionally, we may trade the condition $\mu(\partial A)= 0$ for 
    \begin{equation}
        \nonumber
        \limsup\limits_{n \rightarrow \infty} \mu_n (A^c) < 0.1.
    \end{equation}
    Indeed, if the above condition is satisfied, one may still apply Lemma \ref{Se22Lemma31} to the approximating sequence eventually; in comparison to Theorem \ref{integra_ineq_prop} the convergence of $\mathbbm{1}_{\const A}$ is not required.

\section{Application on Path Space}
The goal of this section is to prove Theorem \ref{weaker_pot_thm}.
The following Corollary is easily deduced from Theorem \ref{produt_prop} by a standard approximation argument.

\begin{corollary}
\label{corollary_msd_2}
    Let the situation in Theorem \ref{produt_prop} be given. In the special case that $\Omega = C \lt([0,1]; \mathbb{R}^d \rt)$ is the path space for some $d \in \mathbb{N}$ and $f: C \lt( [0,1]; \mathbb{R}^d \rt) \rightarrow \mathbb{R}^d $ is linear,
    $$ \lt( \mu^{\otimes T} \rt)^{(g)} \lt(\Vert f(x) \Vert ^2 \rt) \leq \sum\limits_{\gamma \in \{0,1 \}^T} w (\gamma) \mathbf{\mu} ^{- \la \mathbf{h}(\gamma) \ra} \lt(\Vert f(x) \Vert ^2 \rt).$$
\end{corollary}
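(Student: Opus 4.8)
The plan is to deduce this from Theorem~\ref{produt_prop}. The point is that $x\mapsto\|f(x)\|^2$ is \emph{convex} and symmetric rather than (QC), so --- exactly as in the passage from \eqref{quasi_concave_inequ} to \eqref{convex_inequ} in the Proposition --- the inequality of Theorem~\ref{produt_prop} has to be applied in the opposite direction, after truncating $\|f(\cdot)\|^2$ to turn it into a bounded (QC) test function.

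First I would record that, $f$ being linear and $\mathbb{R}^d$-valued, the map $x\mapsto\|f(x)\|^2=\|\cdot\|_{\mathbb{R}^d}^2\circ f$ is convex, symmetric and continuous. For $R>0$ set $f_R:=-\min\{R,\|f(\cdot)\|^2\}$. Since $t\mapsto\min\{R,t\}$ is non-decreasing, $x\mapsto\min\{R,\|f(x)\|^2\}$ has convex sublevel sets and is symmetric, so $f_R$ is symmetric and quasi-concave, i.e.\ (QC); it is moreover bounded and continuous, hence $f_R\in C_b(\Omega^T;\mathbb{R})$. Now apply Theorem~\ref{produt_prop} to the test function $f_R$. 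The key observation is that the weights $\{w(\gamma)\}$ furnished by that theorem depend only on $\mu$, $A$, $T$, $g$ and $\mathbf{h}$, and not on the test function --- inspecting its proof, the $w_n(\gamma)$, hence their limits along the chosen subsequence, are built only from $\delta_n$ and the ratios $\Psi_{n,\gamma}(\mathrm{e}^g)/\mu_n^{\otimes T}(\mathrm{e}^g)$ --- so one and the same family $\{w(\gamma)\}$ serves for every $R$ at once. Multiplying the resulting inequality by $-1$ gives, for all $R>0$,
\[
(\mu^{\otimes T})^{\langle g\rangle}\big(\min\{R,\|f\|^2\}\big)\ \leq\ \sum_{\gamma\in\{0,1\}^T}w(\gamma)\,\mathbf{\mu}^{-\langle\mathbf{h}(\gamma)\rangle}\big(\min\{R,\|f\|^2\}\big).
\]

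Finally I would let $R\to\infty$. On the left $\min\{R,\|f\|^2\}\uparrow\|f\|^2$, so monotone convergence gives convergence to $(\mu^{\otimes T})^{\langle g\rangle}(\|f\|^2)$. On the right the sum over $\gamma$ is finite and each summand converges, again by monotone convergence, to $w(\gamma)\,\mathbf{\mu}^{-\langle\mathbf{h}(\gamma)\rangle}(\|f\|^2)$; here $\mathbf{\mu}^{-\langle\mathbf{h}(\gamma)\rangle}$ is a product of scalings of the Gaussian $\mu$ reweighted by the exponential of a non-positive quadratic form, which keeps $\|f\|^2$ integrable, so all limits are finite. This yields the claimed inequality. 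There is no serious obstacle here; the only two things to get right are the reversal of the inequality (because $\|f\|^2$ is convex, not (QC)) and the $f$-independence of the weights, which together make the truncation-and-limit argument go through --- this is the ``standard approximation argument'' referred to in the statement.
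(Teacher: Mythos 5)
Your proposal is correct and is exactly the ``standard approximation argument'' the paper alludes to: you truncate, observe that $-\min\{R,\|f(\cdot)\|^2\}$ is bounded, continuous and (QC) (mirroring the passage from \eqref{quasi_concave_inequ} to \eqref{convex_inequ}), apply Theorem \ref{produt_prop}, flip the sign, and let $R\to\infty$ by monotone convergence. You also correctly flag the one point that genuinely needs checking, namely that the weights $w(\gamma)$ produced in the proof of Theorem \ref{produt_prop} do not depend on the test function, so a single family serves all truncation levels $R$.
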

Let $c > 0$ be arbitrary. We will choose a specific $c$ shortly. 
In this section take
$\Omega = C \lt([0,c];\mathbb{R}^3 \rt)$. Whenever convenient identify $C \lt([0,c];\mathbb{R}^3 \rt)^2 \simeq C \lt([0,2c];\mathbb{R}^3 \rt)$.
Write
    $$K_{R} := \lt\{ (x_{t})_{t\leq c} \in C \lt([0,c]; \mathbb{R}^3 \rt) : \sup\limits_{s \in [0,c]} \Vert x_{0,s}\Vert  \leq R \rt\}.$$ 

It is easily seen that  $\Probm_{[0,c]}\lt(\partial K_R \rt) = 0$ for any $R>0$. 
In the following we fix $W$ such that Assumption \ref{weaker_potential} is satisfied. We choose the corresponding $\eps >0$ and $\delta>0$ accordingly.

Take $c_b >0$ s.t. for all $\alpha \geq 2$
$$\Probm_{[0,1]} \lt( \sup\limits_{s \leq 1} \Vert x_s \Vert \geq c_b \sqrt{\log(\alpha)} \rt) \leq \alpha^{-10}.$$
It is possible to change the diffusion radius and still obtain the same  bound as in the last display if we restrict the path length.
Diffusive scaling of BM
$$\frac{1}{\sqrt{c}} x_{ct} \sim x_t \iff x_{ct} \sim \sqrt{c} x_t$$
implies that
\begin{equation}
    \label{endpoint_equ}
    \Probm_{[0,c(\alpha)]} \lt( \sup\limits_{s \leq c(\alpha)} \Vert x_s \Vert \geq \eps/2 \rt) \leq \alpha^{-10}
\end{equation}
with
$$c = c (\alpha) := \frac{\eps^2}{4 c_b^2 \log(\alpha)}.$$

Fix $\alpha \geq 2$ and let $T > 0$ be a time s.t. $\tfrac{1}{c} T \in \mathbb{N}$. One can scale the BM in the underlying measure so that this condition is not restrictive. Split $[0,T]$ in $\tfrac{1}{c} T$ smaller intervals of size $c$. By  (\ref{endpoint_equ}) we know that on each of these small intervals BM is essentially concentrated on $K_\eps$; i.e.,
$$\Probm_{[0, c]} \lt( K_{\eps/2} ^c \rt) \leq \alpha^{-10}.$$
Define the vector $\mathbf{h}^\beta$ with $\beta = (\beta_0,\beta_1)$ by
$$h_0 ^{\beta_0}: (x_t)_{t \in [0,c]} \mapsto - \beta_0 \int_0 ^{c} \int_0 ^{c} \Vert x_t - x_s\Vert ^2 \de t \de s,$$
$$h_1 ^{\beta_1} : (x_t)_{t \in [0,2c]} \mapsto - \beta_1 \int_0 ^{c} \int_{c} ^{2c} \Vert x_t - x_s\Vert ^2 \de t \de s.$$
It should be noted that both functions are symmetric, quadratic non-negative forms on path space. 

Our next goal is to show that $\alpha W$ is dominated by $\mathbf{h}^{\beta}$ on $K_{\eps}$ for certain parameters $\beta$, considered on the product space $\Omega^{\otimes\tfrac{1}{c}T}$. To do this, let $\lfloor x \rfloor$ be the biggest number $l< x$ with $l = c n$ for some $n \in \mathbb{N}$. Also, for $l \in \mathbb{N}$, $\gamma(l c) = \gamma(l)$.

\begin{prop}
    \label{densityreplacement}
    Let $T \frac{1}{c} \in \mathbb{N}$ and $\gamma \in \{0,1\}^{T \frac{1}{c}}$.
    Define 
    $$F(s,t) := \begin{cases}
        \beta_0 \text{  if $\lfloor s \rfloor$= $\lfloor t \rfloor$ and $\gamma(\lfloor s \rfloor)=1$}, \\
        \beta_1 \text{  if $\lfloor s \rfloor \in S(\gamma)$ and $t \in \big[ \lfloor s \rfloor + c , \lfloor s \rfloor +2 c \big]$}, \\ 
        0 \text{   else.}
    \end{cases}$$
     For $0 \leq \beta_0, \beta_1 \leq \alpha C_{2\eps}$ and $c = c(\alpha) < \delta$, the map
\begin{equation}
    \nonumber
    \begin{split}
        & x \mapsto 
         \exp \lt( \int_0 ^T \int_0 ^T \alpha W(\Vert x_t - x_s\Vert,|t-s|) + F(s,t) \Vert x_t - x_s\Vert ^2 \de t \de s \rt)
    \end{split}
\end{equation}
    is (QC) on $K^\gamma _{\eps}$. It follows that $\mathbf{h^\beta}$ dominates 
    $$g(x) := \int_0 ^T \int_0 ^T \alpha W(\Vert x_t - x_s\Vert,|t-s|) \de t \de s$$
    on  $K_{ \eps}$.
\end{prop}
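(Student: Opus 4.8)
The plan is to reduce the statement to a one-point-at-a-time monotonicity fact and then invoke the product/limit mechanism of Example~\ref{integral_example}. Write $I_0,\dots,I_{T/c-1}$ for the sub-intervals of $[0,T]$ of length $c$; then $F(s,t)$ is a constant $F_{ab}\in\{0,\beta_0,\beta_1\}$ on each sub-rectangle $I_a\times I_b$, and on each such rectangle the map $(s,t)\mapsto \alpha W(\Vert x_t-x_s\Vert,|t-s|)+F_{ab}\Vert x_t-x_s\Vert^2$ is jointly continuous for a fixed continuous path $x$. Hence, exactly as in Example~\ref{integral_example},
$$\exp\!\Big(\int_0^T\!\!\int_0^T\big[\alpha W(\Vert x_t-x_s\Vert,|t-s|)+F(s,t)\Vert x_t-x_s\Vert^2\big]\,\de s\,\de t\Big)$$
is a uniformly bounded pointwise limit of finite products of factors of the form $\exp\!\big(\lambda[\alpha W(\Vert x_t-x_s\Vert,|t-s|)+F_{ab}\Vert x_t-x_s\Vert^2]\big)$, $\lambda>0$; it is uniformly bounded on $K^\gamma_\eps$ because $W$ is bounded above and $F$ is bounded while $\Vert x_t-x_s\Vert$ stays bounded on $K^\gamma_\eps$ for every $(s,t)$ with $F(s,t)\neq 0$. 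Since $\mathbbm{1}_{K^\gamma_\eps}^{\,m}=\mathbbm{1}_{K^\gamma_\eps}$ and $K^\gamma_\eps$ is convex and symmetric, it suffices to prove that each such factor, multiplied by $\mathbbm{1}_{K^\gamma_\eps}$, is non-negative, symmetric and quasi-concave; (QC) then passes to the finite products and to the uniformly bounded limit.

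For a single factor, since $\exp$ is increasing and $x\mapsto\Vert x_t-x_s\Vert$ is convex and symmetric, it is enough to produce a non-increasing $\psi\colon[0,\infty)\to(0,\infty)$ agreeing with $r\mapsto\exp(\lambda[\alpha W(r,|t-s|)+F_{ab}r^2])$ on the range of $\Vert x_t-x_s\Vert$ over $K^\gamma_\eps$: then $x\mapsto\psi(\Vert x_t-x_s\Vert)$ is (QC) by the ``decreasing $\circ$ convex symmetric'' example, and multiplying by the indicator of the convex symmetric set $K^\gamma_\eps$ keeps it (QC). If $F_{ab}=0$, then the hypothesis that $x\mapsto W(x,|t-s|)$ is (QC) says exactly that $r\mapsto W(r,|t-s|)$ is non-increasing on $\mathbb{R}_{\geq 0}$, so $\psi=\exp(\lambda\alpha W(\cdot,|t-s|))$ works. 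If $F_{ab}=\beta_0$, then $I_a=I_b$ with $\gamma(a)=1$, so on $K^\gamma_\eps$ the $a$-th coordinate lies in $K_\eps$, whence $\Vert x_t-x_s\Vert\leq 2\eps$, while $|t-s|<c<\delta\leq 2\delta$; if $F_{ab}=\beta_1$, then $I_a,I_b$ are consecutive sub-intervals both carrying $\gamma=1$, so $\Vert x_t-x_s\Vert$ is again bounded by a fixed multiple of $\eps$ and $|t-s|<2c<2\delta$. In either of the last two cases Assumption~\ref{weaker_potential} gives that $r\mapsto W(r,|t-s|)+C_{2\eps}r^2$ is non-increasing on $[0,2\eps]$, and the decomposition
$$\alpha W(r,|t-s|)+F_{ab}r^2=\alpha\big(W(r,|t-s|)+C_{2\eps}r^2\big)-(\alpha C_{2\eps}-F_{ab})r^2$$
exhibits $r\mapsto\alpha W(r,|t-s|)+F_{ab}r^2$ as a sum of a function non-increasing on $[0,2\eps]$ (as $\alpha>0$) and one non-increasing on $[0,\infty)$ (as $0\leq F_{ab}\leq\alpha C_{2\eps}$), hence non-increasing on $[0,2\eps]$; extending $\exp(\lambda[\,\cdot\,])$ of it constantly beyond $2\eps$ gives the desired $\psi$.

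This establishes (QC) of the displayed map on $K^\gamma_\eps$ for every $\gamma\in\{0,1\}^{T/c}$. The remaining assertion is bookkeeping: by the very definition of $F$ one has, for each $\gamma$, the identity
$$\int_0^T\!\!\int_0^T F(s,t)\Vert x_t-x_s\Vert^2\,\de s\,\de t=\sum_{j}\mathbbm{1}_{\gamma(j)=1}\,h_0^{\beta_0}(x_j)+\sum_{i\in S(\gamma)}h_1^{\beta_1}(x_i,x_{i+1}),$$
with $h_0^{\beta_0},h_1^{\beta_1}$ the non-negative symmetric quadratic forms above (in the sign convention of $\langle\,\cdot\,\rangle$), so that the function in Definition~\ref{def_domination} associated with $g=\alpha\int_0^T\!\int_0^T W$ is exactly the one just shown to be (QC) on $(K_\eps)^\gamma=K^\gamma_\eps$. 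Hence $\mathbf h^\beta$ dominates $g$ on $K_\eps$.

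I expect the only genuine obstacle to be the geometric accounting in the second paragraph: checking that whenever $F(s,t)\neq 0$ the increment $\Vert x_t-x_s\Vert$ over the relevant pair of sub-intervals actually stays within the window $[0,2\eps]$ on which Assumption~\ref{weaker_potential} is effective. For same-interval pairs this is immediate, but for consecutive-interval pairs the a priori bound is $3\eps$ rather than $2\eps$, so one must have fixed the construction parameter $\eps$ small enough relative to the half-width of the interval supplied by Assumption~\ref{weaker_potential} (and, in tandem, $c=c(\alpha)<\delta$). This is the sole place where the confining/compact-support nature of $W$ is used, and it is precisely the compatibility one must monitor when feeding this Proposition into Theorem~\ref{produt_prop}, where the set at play is a fixed multiple of the small-interval concentration set; the quasi-concavity manipulations themselves are routine.
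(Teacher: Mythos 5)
Your proof is correct and follows essentially the same route as the paper's, which simply invokes Example~\ref{integral_example} together with the observation that $r\mapsto \alpha W(r,|t-s|)+F(s,t)\,r^2$ is non-increasing on the relevant range; your explicit decomposition $\alpha W(r,|t-s|)+F_{ab}r^2=\alpha\big(W(r,|t-s|)+C_{2\eps}r^2\big)-(\alpha C_{2\eps}-F_{ab})r^2$ is precisely the step the paper leaves implicit. Your closing caveat is well taken: for consecutive blocks the a priori increment bound on $K^{\gamma}_{\eps}$ is $3\eps$ rather than the $2\eps$ asserted in the paper's one-line proof, so the window in Assumption~\ref{weaker_potential} (or the radius of the confinement set) must be adjusted by a harmless constant factor --- a bookkeeping point the published argument glosses over and which also interacts with the $c_1 A$ appearing in Theorem~\ref{produt_prop}.
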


\begin{proof}
The proof is given in Example \ref{integral_example} by noting that the function $F$ was chosen in such a way that
$$x \mapsto 
           \alpha W(\Vert x_t - x_s\Vert ,|t-s|) + F(s,t) \Vert x_t - x_s\Vert ^2  $$
is symmetric and quasi-concave on $K_{\eps} ^{\gamma}$. This is because only nearest neighbor intervals are considered (whenever fluctuations are both bounded). That is, for all $s,t \in [0,T]$ with $F(s,t) \neq 0$ we have
$|t-s| \leq 2\delta$ (for sufficiently big $\alpha$) and $\Vert x_t - x_s\Vert \leq 2 \eps$, which implies per assumption that the map in the above display is decreasing. The assumption that $W$ is jointly continuous justifies the approximation by Riemann sums.
\end{proof}

In the following we write  
$c$ used in the definition of $\mathbf{h}^\beta$ in the subscript of $\Probm$. It is very helpful to think of $c$ as the basic block size in which the perturbed measure is eventually split. It is therefore required that the entire time interval $T$ is a multiple of the block size $c$.
Define for $s \in \mathbb{N}$ and $c > 0$
\begin{equation}
\label{prod_measure_rewrite}
    \begin{split}
        \Bar{\Probm}_{c, [0,c s]} ^{\beta} (\de x) & \propto  \mathrm{e}^{- \beta \sum\limits_{j=0} ^{s-1} \int_{c j} ^{c(j+1)} \int_{cj} ^{c(j+1)} \Vert x_t-x_s\Vert ^2 \de t\de s - \beta \sum\limits_{j=0} ^{s-2} \int_{c j} ^{c(j+1)} \int_{c(j+1)} ^{c (j+2)} \Vert x_t-x_s\Vert ^2 \de t\de s} \Probm_{[0,c s]}(\de x) \\
        &\propto \mathrm{e}^{- \beta \sum\limits_{j=0} ^{s-1} \int_{cj} ^{c(j+1)} \int_{cj} ^{c (j+1)} \Vert x_t - x_s\Vert ^2 \de t\de s - \beta \sum\limits_{j=0} ^{s-2} \int_{c j} ^{c(j+1)} \int_{c(j+1)} ^{c(j+2)} \Vert x_t+ x_{c(j+1)} - x_s \Vert ^2 \de t\de s} \Probm_{[0,c]}^{\otimes s}(\de x).
    \end{split}
\end{equation}

The following result is deduced in the same manner as in \cite[Section 6]{Se22}.
\begin{lemma}[{\cite[Lemma 6.5]{Se22}}]
Let $s \in \mathbb{N}$ and $\beta \geq 2$. Then,
\label{long_time_fluctuation_lemma}
$$\Bar{\Probm}_{1,[0,s]} ^{\beta} \lt(\Vert x_{0,s}\Vert ^2 \rt) \leq O \lt( \frac{s}{\beta} + \beta^{-1/2} \rt).$$
\end{lemma}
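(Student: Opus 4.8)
Here is how I would prove the statement; the argument coincides with \cite[Lemma 6.5]{Se22}.

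The plan is to treat $\nu:=\Bar{\Probm}^{\beta}_{1,[0,s]}$, which is the law $\Probm_{[0,s]}$ of Brownian motion reweighted by the exponential of a non-negative continuous quadratic functional $Q_\beta$, as a centred Gaussian measure on $C([0,s];\bbR^3)$, and to estimate $\nu(\Vert x_{0,s}\Vert^2)$ via the Cameron--Martin duality: for every bounded linear functional $\ell\colon C([0,s];\bbR^3)\to\bbR$,
\[
\nu(\ell^2)=\big(\inf\{\Vert f\Vert_\nu^2:\ \ell(f)=1\}\big)^{-1},\qquad \Vert f\Vert_\nu^2:=\int_0^s\Vert f'(t)\Vert^2\,\de t+2Q_\beta(f),
\]
the infimum being over absolutely continuous $f$ with $f(0)=0$. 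Writing $\overline{x}^{(j)}:=\int_j^{j+1}x_t\,\de t$ and $V_j(x):=\int_j^{j+1}\Vert x_t-\overline{x}^{(j)}\Vert^2\,\de t$, the identities
\[
\int_j^{j+1}\!\!\int_j^{j+1}\Vert x_t-x_u\Vert^2\,\de t\,\de u=2V_j(x)
\]
and
\[
\int_j^{j+1}\!\!\int_{j+1}^{j+2}\Vert x_t-x_u\Vert^2\,\de t\,\de u=V_j(x)+V_{j+1}(x)+\Vert\overline{x}^{(j)}-\overline{x}^{(j+1)}\Vert^2
\]
(the cross terms vanish since $\int_j^{j+1}(x_t-\overline{x}^{(j)})\,\de t=0$) give
\[
Q_\beta(x)=\beta\Big(\sum_{j=0}^{s-1}c_j\,V_j(x)+\sum_{j=0}^{s-2}\Vert\overline{x}^{(j)}-\overline{x}^{(j+1)}\Vert^2\Big),\qquad c_j\ge 2,
\]
so that $Q_\beta$ splits into a fine part (the $V_j$) and a coarse part (a tridiagonal quadratic form in the block averages $\overline{x}^{(j)}$).

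Accordingly I would split the displacement along the two scales,
\[
x_{0,s}=x_s-x_0=\big(\overline{x}^{(s-1)}-\overline{x}^{(0)}\big)+\big(\overline{x}^{(0)}-x_0\big)+\big(x_s-\overline{x}^{(s-1)}\big)=:A+B+C,
\]
so that $\nu(\Vert x_{0,s}\Vert^2)\le 3\big(\nu(\Vert A\Vert^2)+\nu(\Vert B\Vert^2)+\nu(\Vert C\Vert^2)\big)$ and it suffices to bound the three terms coordinate by coordinate. For $A$ (assume $s\ge 2$, otherwise $A=0$): if $f$ is admissible with $\overline{f}^{(s-1)}-\overline{f}^{(0)}=1$ in the chosen coordinate, then, discarding the fine part of $Q_\beta$ and applying Cauchy--Schwarz to the telescoping sum,
\[
\Vert f\Vert_\nu^2\ \ge\ 2\beta\sum_{j=0}^{s-2}\big(\overline{f}^{(j)}-\overline{f}^{(j+1)}\big)^2\ \ge\ \frac{2\beta}{s-1}\Big(\sum_{j=0}^{s-2}\big(\overline{f}^{(j)}-\overline{f}^{(j+1)}\big)\Big)^{2}=\frac{2\beta}{s-1},
\]
hence $\nu(\Vert A\Vert^2)\le \tfrac{3(s-1)}{2\beta}=O(s/\beta)$.

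For $B$ and $C$ the same principle is used on a single block. If $f$ is admissible with $\overline{f}^{(0)}=1$ in the chosen coordinate (so $f(0)=0$ and $\int_0^1 f_t\,\de t=1$), then, keeping only the block-$0$ fine part of $Q_\beta$ and using $c_0\ge 2$,
\[
\Vert f\Vert_\nu^2\ \ge\ \int_0^1 f'(t)^2\,\de t+4\beta\int_0^1\big(f_t-1\big)^2\,\de t\ \ge\ \inf\Big\{\int_0^1 h'^2+4\beta\int_0^1 h^2:\ h(0)=-1\Big\}=2\sqrt{\beta}\,\tanh\!\big(2\sqrt{\beta}\big),
\]
the last infimum being the explicit one-dimensional boundary-value problem $-h''+4\beta h=0$, $h(0)=-1$, $h'(1)=0$. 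Therefore $\nu(\Vert B\Vert^2)=O(\beta^{-1/2})$, and the identical computation on the block $[s-1,s]$ gives $\nu(\Vert C\Vert^2)=O(\beta^{-1/2})$. Summing the three bounds yields $\nu(\Vert x_{0,s}\Vert^2)=O(s/\beta+\beta^{-1/2})$.

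The only genuinely non-formal ingredient is the one-dimensional boundary-value computation, and it is where the $\beta^{-1/2}$ (rather than $\beta^{-1}$) rate originates: the within-block penalty confines the path to a layer of width $\asymp\beta^{-1/2}$ about its block average. The conceptual point is the choice of decomposition $x_{0,s}=A+B+C$, tuned so that $A$ is matched against the coarse part of $Q_\beta$ by a crude summation inequality while $B,C$ are matched against the fine part by the one-dimensional estimate; given the block form of $Q_\beta$, both bounds fall straight out of the Cameron--Martin duality.
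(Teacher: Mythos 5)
Your proof is correct. Note that the paper itself does not prove this lemma at all: it is imported verbatim as \cite[Lemma 6.5]{Se22}, with the remark that it ``is deduced in the same manner as in \cite[Section 6]{Se22}'', so there is no in-paper argument to compare against. Your reconstruction is a legitimate self-contained proof and uses exactly the two ingredients one expects: the block decomposition of the quadratic form into within-block fluctuations $V_j$ and a coarse nearest-neighbour form in the block averages $\overline{x}^{(j)}$ (your two integral identities check out, with $c_j\in\{2,3,4\}$), and the Cameron--Martin/variational identity $\nu(\ell^2)=(\inf\{\|f\|_\nu^2:\ell(f)=1\})^{-1}$ with $\|f\|_\nu^2=\int\|f'\|^2+2Q_\beta(f)$ --- which is the same ``continuous analog of \cite[Eq.\ (3.3)]{MV19}'' that the paper invokes without proof in Proposition \ref{quadratic_computation_prop}, so your level of rigour matches the paper's. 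The telescoping Cauchy--Schwarz bound for $A$ and the one-dimensional boundary-value problem giving $2\sqrt{\beta}\tanh(2\sqrt{\beta})$ for $B$ and $C$ are both verified (dropping the constraint $\int_0^1 h=0$ only lowers the infimum, so the direction of the inequality is right), and summing over the three coordinates is legitimate since $Q_\beta$ and the Wiener measure factorize coordinatewise. The only point I would make explicit in a write-up is the justification of the infinite-dimensional Cameron--Martin identity for the reweighted measure (e.g.\ via the finite-dimensional approximation machinery of Theorem \ref{approximation_theorem}, or by noting that $\|\cdot\|_\mu^2+2Q_\beta$ is equivalent to $\|\cdot\|_\mu^2$ so the Cameron--Martin space is unchanged as a set); as stated it is asserted rather than proved, but it is standard and consistent with the paper's own usage.
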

By an application of Brownian rescaling one obtains
\begin{equation}
    \label{good_block_calc_weak}
    \Bar{\Probm}_{c,[0,c s]} ^{\beta} \lt(\Vert x_{0,c s}\Vert ^2 \rt) = c \Bar{\Probm}_{1,[0,s]} ^{\beta c^3} \lt(\Vert x_{0,s}\Vert ^2 \rt).
\end{equation}

    Take $\gamma \in \{0,1\}^T$. We define \textbf{good blocks} to be maximal subintervals $[i,j]\subseteq [0,T]$ for which $\gamma(i) = \gamma(i+1) = ... = \gamma(j) =  1$, i.e. also $\gamma(i-1) = 0 = \gamma(j+1)$ (with convention $\gamma(-1) = \gamma(T+1) = 0$). Let $B(\gamma)$ be the set of good blocks associated to $\gamma$, where each block is identified with its length. A \textbf{bad block} is an interval with $\gamma(k) = 0$.

    The next calculation will apply Theorem \ref{produt_prop} to $\ConcavePotentialMeasure_{\alpha,[0,T]}$. By joining the endpoints of the path from previous intervals to the next one it is possible to write $\ConcavePotentialMeasure_{\alpha,[0,T]}$ as a reweighted product measure. As the notation is quite cumbersome, we omit the details. For a simple example we refer to equation (\ref{prod_measure_rewrite}).
    
\begin{proof}[Proof for Theorem \ref{weaker_pot_thm}]

W.l.o.g. let $\tfrac{1}{c}T \in \mathbb{N}$. Choose $\beta = \beta_0 = \beta_1 = \alpha C_{2\eps}$. For fixed $\gamma$ there are $|M_0 (\gamma)| +1$ good blocks at most. By independent increments of Brownian Motion, the structure of $\Probm_{c,[0,T]}^{- \la \mathbf{h}^{\beta} (\gamma) \ra}$ and the fact that the law of $\Probm_{c,[0,T]}^{- \la \mathbf{h}^{\beta} (\gamma) \ra}$ on a good block of length $s$ is given by
 $\Bar{\Probm}_{c,[0,c s]} ^{\beta}$, we split the integral along its blocks to obtain
    $$\Probm_{c,[0,T]}^{- \la \mathbf{h}^{\beta}(\gamma) \ra} \lt(\Vert x_{0,T}\Vert ^2 \rt) \leq \sum\limits_{l \in B(\gamma)} \Bar{\Probm}_{c[0,c l]} ^{\beta} \lt(\Vert x_{0,c l}\Vert ^2 \rt) + |M_0 (\gamma)| \Probm_{[0,c]}^{\times 2}\lt(\Vert  x_{0,c} \Vert ^2 \rt).$$
    Use Proposition \ref{densityreplacement} to apply Corollary \ref{corollary_msd_2} with
    
    \begin{itemize}
        \item  $\mu = \Probm_{[0,c]}$,
        \item $\mathbf{h} = \mathbf{h}^{\beta}$ with $\beta = (\alpha  C_{2 \eps},\alpha  C_{2 \eps})$, 
        \item $g(x) = \alpha \int_0 ^T \int_0 ^T W(\Vert x_t-x_s\Vert ,|t-s|) \de t \de s$,
        \item $A= K_{ \eps}$,
        \item $\Tilde{T} = \tfrac{1}{c} T$
    \end{itemize}
   and estimate with $\delta \leq \alpha^{-10}$ and (\ref{good_block_calc_weak})
\begin{equation}
    \nonumber
    \begin{split}
        & \ConcavePotentialMeasure_{[0,T]}\lt( \Vert  x_{0,T}\Vert ^2 \rt) \leq \sum\limits_{\gamma \in \{0,1 \}^{c^{-1}T}} w (\gamma) \mathbf{\Probm}_{c,[0,T]} ^{- \la \mathbf{h}^{\beta}(\gamma) \ra}  \lt(\Vert  x_{0,T}\Vert ^2 \rt) \\
        &\leq \sum\limits_{\gamma \in \{0,1 \}^{c^{-1}T}} w (\gamma) \Big[ \sum\limits_{l \in B(\gamma)} \Bar{\Probm}_{c,[0,c l]} ^{\beta} \lt(\Vert x_{0,c l}\Vert ^2 \rt) + |M_0 (\gamma)| \Probm_{[0,c]}^{\times 2} \lt(\Vert  x_{0,c} \Vert ^2 \rt) \Big] \\
        &\leq \sum\limits_{\gamma \in \{0,1 \}^{c^{-1}T}} w (\gamma) \lt[  O \lt( \frac{T}{c^3 \beta} + \lt(|M_0 (\gamma)|+1 \rt) (\beta c^3)^{-1/2}  \rt) + |M_0 (\gamma)| \Probm_{[0,1]}^{\times 2} \lt(\Vert  x_{0,1} \Vert ^2 \rt) \rt] \\
        &\leq O \lt( \frac{T}{\beta c^3} + \lt(\alpha^{-10} T +1 \rt) (\beta c^3)^{-1/2} \rt) + \sum\limits_{\gamma \in \{0,1 \}^{c^{-1}T}} w (\gamma) |M_0 (\gamma)| \Probm_{[0,1]}^{\times 2} \lt(\Vert  x_{0,1} \Vert ^2 \rt) \\
        &\leq O \lt( \frac{T}{c^3 \beta} + \lt(\alpha^{-10} T +1 \rt) (\beta c^3)^{-1/2} \rt) + \alpha^{-10} \frac{T}{c} \Probm_{[0,1]}^{\times 2} \lt(\Vert  x_{0,1} \Vert ^2 \rt) \\
        &= O \lt( \frac{T \log(\alpha)^3}{\alpha} + \lt( \frac{\alpha}{\log(\alpha)^3} \rt)^{-1/2}  \rt).
\qedhere
    \end{split} 
\end{equation}
    
\end{proof}

\begin{appendix}

\section{Gaussian Approximation}
We will require the notion of an abstract Wiener Space. For us, an abstract Wiener Space is a triplet which consists of a separable Banach space $\Omega$, a non-degenerate centred Gaussian $\mu$ on $\Omega$ and the corresponding Cameron Martin space $H$. Recall that $H$ is dense in $\Omega$, uniquely determined by $\mu$ and that $H \hookrightarrow \Omega$ is continuous (\cite{St23}, Theorem 3.3.2.).

The continuity of the embedding implies via the Riesz Representation Theorem that for each $\xi \in \Omega^*$ there exists a corresponding $h_\xi \in H$ s.t. 
$\xi(h) = \la h_\xi,h \ra_H$ for each $h \in H$. Moreover, one can show that the map
 $$\xi \mapsto h_\xi$$
is linear, continuous and one-to-one (\cite{St23}, Lemma 3.3.1 \textbf{(i)}). Also,
    for each $f \in \Omega^*$ it holds that (\cite{St23}, Theorem 3.3.2)
    \begin{equation}
        \label{cameron_martin_cf}
        \int \mathrm{e}^{i f(x)} \mu (\de x) = \mathrm{e}^{-\frac{1}{2} \Vert h_f \Vert_H ^2}.
    \end{equation}
At last, it can be concluded that there exists a sequence $\{ \xi_n : n \in \mathbb{N}\}\subseteq \Omega^*$ s.t. the set  $\{ h_{\xi_n} : n \in \mathbb{N}\}$ is an ONB for $H$ (\cite{St23}, Lemma 3.3.1 \textbf{(iii)}).
It follows that for each $h \in H$
$$h = \sum\limits_{i=1} ^\infty \la h_{\xi_i}, h \ra_H h_{\xi_i} = \sum\limits_{i=1} ^\infty \xi_i (h) h_{\xi_i}.$$

Define for each $x \in \Omega$ the projection

$$P_n x := \sum\limits_{i=1} ^n \xi_i (x) h_{\xi_i}.$$
It is easily seen that $P_n$ extends the projection of $H$ onto span$\{h_{\xi_1},...,h_{\xi_n}\}$ to $\Omega$.

We define
    $$\mu_n := \mu \circ (P_n)^{-1}.$$
By definition it is clear that supp$(\mu_n) \subseteq \Omega$ is finite-dimensional. Moreover, it is also immediate that $\mu_n$ is centred Gaussian. To find its covariance take $f \in \Omega^*$ and calculate
    \begin{equation}
        \nonumber
        \begin{split}
            \int \mathrm{e}^{i f(x)} \mu_n(\de x) &= \int \mathrm{e}^{i \sum\limits_{j=1} ^n \xi_j(x) f(h_{\xi_j})} \mu(\de x) \\
            &= \exp\lt( -\frac{1}{2}  \lt\la \sum\limits_{j=1} ^n h_{\xi_j} f( h_{\xi_j}), \sum\limits_{j=1} ^n h_{\xi_j} f( h_{\xi_j})  \rt\ra_H \rt) \\
            &= \exp\lt( -\frac{1}{2} \sum\limits_{j=1} ^n \la h_f, h_{\xi_j} \ra_H ^2 \rt). 
        \end{split}
    \end{equation}

It can be concluded that for $f \in \Omega^*$
\begin{equation}
    \label{ito_nisio_1}
    \int \mathrm{e}^{i f(x)} \mu_n(\de x) \rightarrow \int \mathrm{e}^{i f(x)} \mu(\de x)
\end{equation}
via (\ref{cameron_martin_cf}).
Define the random variables
$$X_j: \Omega \rightarrow \Omega, \: \: \: x \mapsto \xi_j (x) h_{\xi_j}.$$
Notice that $X_i$ is independent of $X_j$ whenever $i \neq j$ and that $X_i$ is symmetrically distributed because $\xi_i$ is. Then, using the Itô-Nisio Theorem (\cite{ItNi68}, Theorem 4.1), we see that (\ref{ito_nisio_1}) implies $\mu$-a.e.
$$ \sum\limits_{j=1} ^n X_j \rightarrow I,$$
where $I$ is the identity on $\Omega$.
We collect this result in the following Lemma.

\begin{lemma}
    \label{main_theorem}
    Let $\Omega$ be a separable Banach space endowed with Borel-$\sigma$-field $\mathcal{B}$ and let $\mu$ be a non-degenerate, centred Gaussian measure. Then, there exists a sequence of projections $(P_n)_{n \in\mathbb{N}}$ such that $\Omega_n := P_n(\Omega)$ is finite-dimensional and $\mu$-a.e. 
    $$\lim\limits_{n \rightarrow \infty} P_n x = x.$$
\end{lemma}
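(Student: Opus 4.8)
The statement to prove is essentially a restatement of the construction carried out in the preceding paragraphs, so the plan is to assemble those pieces into a clean argument. I start from an abstract Wiener space $(\Omega,\mu,H)$, which exists because $\mu$ is a non-degenerate centred Gaussian on the separable Banach space $\Omega$, and I invoke the cited facts: $H$ is separable (being a dense subspace of the separable $\Omega$, or directly from \cite{St23}), so by Lemma 3.3.1(iii) of \cite{St23} there is a sequence $\{\xi_n\}\subseteq\Omega^*$ with $\{h_{\xi_n}\}$ an orthonormal basis of $H$. Define $P_n x := \sum_{i=1}^n \xi_i(x)\, h_{\xi_i}$ and $\mu_n := \mu\circ P_n^{-1}$; then $\Omega_n := P_n(\Omega) = \operatorname{span}\{h_{\xi_1},\dots,h_{\xi_n}\}$ is finite-dimensional, which handles the first assertion.

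The substance is the $\mu$-a.e. convergence $P_n x \to x$. The plan is to run the Itô–Nisio theorem \cite{ItNi68}. First I set $X_j(x) := \xi_j(x)\, h_{\xi_j}$, so $P_n = \sum_{j=1}^n X_j$; the $X_j$ are independent (because $\{\xi_j\}$ are jointly Gaussian with $\operatorname{Cov}(\xi_i,\xi_j) = \langle h_{\xi_i},h_{\xi_j}\rangle_H = \delta_{ij}$ under $\mu$, hence independent, being Gaussian and uncorrelated) and each $X_j$ is symmetric since $\xi_j$ is. Next, using the characteristic-function formula \eqref{cameron_martin_cf} and the Parseval identity $\sum_{j=1}^\infty \langle h_f, h_{\xi_j}\rangle_H^2 = \|h_f\|_H^2$, I show that for every $f\in\Omega^*$ the scalar characteristic functions converge, $\int e^{if(x)}\mu_n(\de x)\to\int e^{if(x)}\mu(\de x)$, i.e. $\mu_n \to \mu$ in the sense of finite-dimensional characteristic functionals. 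The Itô–Nisio theorem then upgrades this weak-type convergence of the partial sums $\sum_{j=1}^n X_j$ of independent symmetric Banach-space random variables to almost sure convergence in $\Omega$; the a.s. limit is the random variable whose law is $\mu$, which on the probability space $(\Omega,\mathcal B,\mu)$ is the identity map $I$. Hence $P_n x\to x$ for $\mu$-a.e.\ $x$, which is the second assertion.

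The main obstacle is making sure the hypotheses of the Itô–Nisio theorem are genuinely met and that its conclusion is being applied on the right probability space. Concretely: (i) one must check independence of the $X_j$ rigorously — this is where joint Gaussianity of the $\xi_j$ is used, not merely orthonormality of the $h_{\xi_j}$ in $H$ — and their symmetry; (ii) one must confirm that convergence of one-dimensional characteristic functions $\int e^{if}\, \de\mu_n$ for all $f\in\Omega^*$ is exactly the "convergence in distribution" hypothesis in the version of Itô–Nisio being cited (for sums of independent symmetric summands this is the classical statement, so this is a matter of quoting the right theorem); and (iii) one must identify the a.s. limit as the identity: Itô–Nisio gives that $\sum_j X_j$ converges a.s.\ to \emph{some} $\Omega$-valued random variable $Y$ whose law is the weak limit of the $\mu_n$, and since that weak limit is $\mu$ and the underlying space is $(\Omega,\mu)$ itself, $Y=I$ $\mu$-a.s. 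None of these steps requires computation beyond what is already displayed in the excerpt; the work is purely in citing correctly and stating the logical chain, so I would keep the write-up short and point to \cite{St23} and \cite{ItNi68} for the external inputs.
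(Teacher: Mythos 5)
Your proposal is correct and follows essentially the same route as the paper: construct the orthonormal basis $\{h_{\xi_n}\}$ of the Cameron--Martin space via \cite{St23}, define $P_n x=\sum_{i\le n}\xi_i(x)h_{\xi_i}$, verify convergence of the characteristic functionals of $\mu_n=\mu\circ P_n^{-1}$ to that of $\mu$, and apply the It\^o--Nisio theorem to the independent symmetric summands $X_j=\xi_j(\cdot)h_{\xi_j}$ to upgrade this to $\mu$-a.e.\ convergence to the identity. Your additional remarks (independence via joint Gaussianity plus uncorrelatedness, and the identification of the a.s.\ limit) only make explicit steps the paper leaves implicit.
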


\begin{prop}
    $\mu_n \rightarrow \mu$ weakly.
\end{prop}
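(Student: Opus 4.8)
The plan is to deduce weak convergence $\mu_n \to \mu$ from the convergence of characteristic functionals established in \eqref{ito_nisio_1}, together with a tightness argument. The key point is that on a separable Banach space, pointwise convergence of characteristic functionals does not by itself yield weak convergence (unlike in finite dimensions via L\'evy's continuity theorem), so one needs an additional compactness ingredient. Fortunately, Lemma \ref{main_theorem} already gives us the strong tool: $P_n x \to x$ for $\mu$-a.e.\ $x$, which is much stronger than convergence in distribution.

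First I would observe that $\mu_n = \mu \circ P_n^{-1}$ is exactly the law of the random variable $P_n$ under $\mu$. By Lemma \ref{main_theorem}, $P_n x \to x$ for $\mu$-almost every $x \in \Omega$, i.e.\ $P_n \to \mathrm{Id}$ $\mu$-almost surely as $\Omega$-valued random variables on the probability space $(\Omega, \mathcal{B}, \mu)$. Almost sure convergence implies convergence in distribution, and the distribution of $\mathrm{Id}$ under $\mu$ is $\mu$ itself. Hence for every bounded continuous $F: \Omega \to \mathbb{R}$ we have $\mu_n(F) = \int F(P_n x)\, \mu(\de x) \to \int F(x)\, \mu(\de x) = \mu(F)$ by dominated convergence (using that $F$ is bounded and $F(P_n x) \to F(x)$ pointwise by continuity of $F$). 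This is precisely weak convergence $\mu_n \to \mu$.

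The only subtlety is whether one wants to rely on \eqref{ito_nisio_1} and the It\^o--Nisio theorem, which is the route the excerpt has already set up: \eqref{ito_nisio_1} gives convergence of the characteristic functionals, It\^o--Nisio upgrades this (using independence and symmetry of the summands $X_j$) to $\mu$-a.s.\ convergence $\sum_{j=1}^n X_j \to I$, and then the dominated convergence argument of the previous paragraph concludes. So the proof is essentially a one-line invocation of Lemma \ref{main_theorem} (or equivalently the It\^o--Nisio conclusion) followed by bounded convergence.

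I do not expect a genuine obstacle here; the work has all been done in establishing Lemma \ref{main_theorem}. The one thing to be careful about is making the logical chain explicit: pointwise a.s.\ convergence of $\Omega$-valued random variables $\Rightarrow$ convergence in law of their distributions $\Rightarrow$ (since the distributions are $\mu_n$ and the limit law is $\mu$) weak convergence of $\mu_n$ to $\mu$. If one prefers to avoid citing It\^o--Nisio again, one can alternatively note directly that $\|P_n x - x\|_\Omega \to 0$ for $\mu$-a.e.\ $x$ (which is the content of Lemma \ref{main_theorem}) and run bounded convergence on $F \circ P_n \to F$.

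\begin{proof}
By Lemma \ref{main_theorem} there is a sequence of finite-rank projections $(P_n)_{n \in \mathbb{N}}$ with $\mu_n = \mu \circ P_n^{-1}$ and $P_n x \to x$ in $\Omega$ for $\mu$-a.e.\ $x$. Fix any bounded continuous $F \in C_b(\Omega;\mathbb{R})$. Since $F$ is continuous, $F(P_n x) \to F(x)$ for $\mu$-a.e.\ $x$, and since $F$ is bounded, dominated convergence gives
$$
\mu_n(F) = \int_\Omega F(P_n x)\, \mu(\de x) \longrightarrow \int_\Omega F(x)\, \mu(\de x) = \mu(F).
$$
As $F \in C_b(\Omega;\mathbb{R})$ was arbitrary, $\mu_n \to \mu$ weakly.
\end{proof}
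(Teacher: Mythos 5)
Your proof is correct and takes essentially the same approach as the paper: both arguments hinge entirely on Lemma \ref{main_theorem} (that $P_n x \to x$ for $\mu$-a.e.\ $x$) and then convert this almost sure convergence into convergence in distribution. The only cosmetic difference is the final step — you test against bounded continuous functions and apply dominated convergence, while the paper applies Fatou's lemma to indicators of open sets and invokes the Portmanteau theorem; these are interchangeable standard routes.
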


\begin{proof}
    Let $A \in \mathcal{B}$ be an open set and let $(x_n)_{n \in \mathbb{N}} \subseteq \Omega$ be a sequence s.t. $x_n \rightarrow x$. We claim that 
    $$\mathbbm{1}_A (x_n) \geq \mathbbm{1}_A (x)$$
    for all $n \geq n_0 \in \mathbb{N}$ which implies that
    $$\liminf\limits_{n \rightarrow \infty} \mathbbm{1}_A (x_n) \geq \mathbbm{1}_A (x).$$
    Indeed, if $x \in A$, there exists some ball with radius $\eps >0$ centred at $x$ that is also contained in $A$. $x_n$ converges to $x$, so eventually $x_n \in B(x,\eps) \subseteq A$ for all $n \geq n_0$. The case $x \notin A$ is trivial. An application of Fatou's Lemma in conjunction with Lemma \ref{main_theorem} (use $x_n := P_n x)$ yields
    \begin{equation}
        \begin{split}
            \liminf\limits_{n \rightarrow \infty} \mu_n(A) = \liminf\limits_{n \rightarrow \infty} \int \mathbbm{1}_A (x) \mu_n(\de x)  &\geq  \int \liminf\limits_{n \rightarrow \infty} \mathbbm{1}_A (P_n x) \mu(\de x) \\
            &\geq \int \mathbbm{1}_A (x) \mu(\de x) = \mu(A). 
        \end{split}
    \end{equation}
    Using the Portmanteau Theorem shows the claim.
\end{proof}

\section{Measure of Boundary of Symmetric, Convex Sets}
We use $\sqcup$ to denote disjoint unions, and always assume $\mu$ to be a Borel probability measure on a real Banach space.
\begin{prop}
\label{ball_prop}
    Let $\mu$ be a Borel probability measure. Moreover, let $A$ be a closed, symmetric and convex set s.t. $\mu(A) > 0$ and $\mu(\partial A )=0$. Then, there exists $\eps > 0$ such that $B(0,\eps) \subseteq A$.
\end{prop}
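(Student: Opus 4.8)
The plan is to show directly that the origin is an \emph{interior} point of $A$; once that is known, any open ball $B(0,\eps)$ contained in $\mathrm{int}(A)$ witnesses the claim. The first step is to observe that the two hypotheses together force $A$ to have nonempty interior. Since $A$ is closed we may write the disjoint decomposition $A = \mathrm{int}(A) \sqcup \partial A$, so that
\[
\mu(\mathrm{int}(A)) = \mu(A) - \mu(\partial A) = \mu(A) > 0,
\]
and in particular $\mathrm{int}(A) \neq \emptyset$.

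The second step uses symmetry and convexity to transport an arbitrary interior point to $0$. Fix $x_0 \in \mathrm{int}(A)$, so there is $r>0$ with $B(x_0,r) \subseteq A$. Because the reflection $x \mapsto -x$ is a homeomorphism of $\Omega$ and $A = -A$, we also have $B(-x_0,r) = -B(x_0,r) \subseteq A$. Convexity of $A$ then gives
\[
B(0,r) = \tfrac12 B(x_0,r) + \tfrac12 B(-x_0,r) \subseteq \tfrac12 A + \tfrac12 A \subseteq A,
\]
so $0 \in \mathrm{int}(A) \subseteq A$, which is exactly the assertion with $\eps = r$. (Equivalently, one may invoke that $\mathrm{int}(A)$ is a convex symmetric open set, hence contains $\tfrac12 x_0 + \tfrac12(-x_0) = 0$.)

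There is essentially no obstacle here: the only points needing a word of justification are the decomposition $A = \mathrm{int}(A) \sqcup \partial A$ (which is where closedness of $A$ enters), the commutation of "interior" with the reflection $x \mapsto -x$, and the Minkowski-sum identity for balls together with $\tfrac12 A + \tfrac12 A \subseteq A$ for convex $A$ — each a one-liner. It is worth noting that the hypothesis $\mu(\partial A) = 0$ is precisely what excludes degenerate candidates such as a proper closed subspace (which has empty interior, so $\partial A = A$ and $\mu(\partial A) = \mu(A)$, forcing $\mu(A) = 0$); without it the statement would be false.
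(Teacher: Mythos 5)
Your proof is correct and follows essentially the same route as the paper: decompose $A = A^\circ \sqcup \partial A$ to get $\mu(A^\circ) = \mu(A) > 0$, pick an interior point $x_0$ with $B(x_0,r)\subseteq A$, reflect by symmetry, and use convexity to place $B(0,r)$ inside $A$ (the paper phrases the last step via $\mathrm{conv}\big(B(x_0,r),B(-x_0,r)\big)\subseteq A$ rather than the Minkowski-sum identity, but this is the same argument).
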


\begin{proof}
    Since $A= \partial A \sqcup A^\circ$, the assumption gives that $\mu(A^\circ) > 0$. Then, $A^\circ \neq \emptyset$ and so there exists $x \in A^\circ$ with $B(x,\eps) \subseteq A$. By symmetry of $A$,  $B(-x,\eps) \subseteq A$. Using the convexity we know that  $\text{conv} \big(B(x,\eps),B(-x,\eps) \big) \subseteq A$ and so $B(0,\eps) \subseteq A$.
\end{proof}
\begin{prop}
    \label{line_prop}
    If $A$ is convex,  $x \in A^{\circ}$, $y \in \overline{A}$, then
    $$[x,y) \subseteq A^{\circ}.$$
\end{prop}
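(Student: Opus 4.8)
The plan is to verify directly that every point of the half‑open segment $[x,y)$ is an interior point of $A$. Write a generic such point as $z_t := (1-t)x + t y$ with $t \in [0,1)$. If $t=0$ then $z_0 = x \in A^\circ$ and there is nothing to prove, so from now on assume $t \in (0,1)$.

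First I would invoke $x \in A^\circ$ to fix $\eps > 0$ with $B(x,\eps) \subseteq A$, and then use $y \in \overline{A}$ to choose $y' \in A$ with $\Vert y - y'\Vert < \tfrac{(1-t)\eps}{2t}$. The elementary observation driving the argument is that for a convex set the collection of convex combinations $\{(1-t)x' + t y' : x' \in B(x,\eps)\}$ is exactly the open ball $B\big((1-t)x + t y',\,(1-t)\eps\big)$, and this ball lies in $A$: any of its points has the form $(1-t)x + t y' + v$ with $\Vert v\Vert < (1-t)\eps$, which we rewrite as $(1-t)\big(x + \tfrac{v}{1-t}\big) + t y'$; since $x + \tfrac{v}{1-t} \in B(x,\eps) \subseteq A$ and $y' \in A$, convexity of $A$ places this point in $A$.

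It then remains to observe that the center $(1-t)x + t y'$ of this ball is close to $z_t$, namely $\Vert z_t - ((1-t)x + t y')\Vert = t\Vert y - y'\Vert < \tfrac{(1-t)\eps}{2}$. Consequently $B\big(z_t,\tfrac{(1-t)\eps}{2}\big) \subseteq B\big((1-t)x + t y',\,(1-t)\eps\big) \subseteq A$, so $z_t \in A^\circ$, which finishes the proof. The argument is routine convex geometry in a normed space; the only point demanding a little care is the bookkeeping of radii — in particular ensuring that the rescaled perturbation $x + v/(1-t)$ still lies in $B(x,\eps)$, which is precisely what dictates how close $y'$ must be chosen to $y$ — and there is no substantial obstacle.
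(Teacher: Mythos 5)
Your proof is correct and is essentially the same argument as the paper's (which follows Hug--Weil): both exploit that the convex combination of the interior ball $B(x,\eps)$ with a point of $A$ sufficiently close to $y$ is an open ball contained in $A$ that captures $z_t$. Your version merely makes the choice of the approximating point $y'$ and the radius bookkeeping explicit where the paper passes to a sequence $y_n \to y$ and takes $n$ large.
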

\begin{proof}
    We follow the proof given in \cite{HuWe20}. Take $x \in A^{\circ}$ and $y \in \overline{A}$. For $\alpha \in (0,1)$\footnote{The case $\alpha=1$ is trivial.} write $z = \alpha x + (1-\alpha)y$. We wish to show that $z \in A^{\circ}$. To this end, take $(y_n)_{n \in \mathbb{N}} \subseteq A$ with $y_n \rightarrow y$. Let $\eps >0$ so that $B(x,\eps) \subseteq A$. Note that the sequence $(x_n)_{n \in \mathbb{N}}$ defined by
    $$x_k := \frac{1}{\alpha} \big(z - (1-\alpha)y_k \big)$$
    converges to $x$. That is, there exists $n_0 \in \mathbb{N}$ s.t.,for all $n \geq n_0$, $x_n \in B(x,\eps)$ and so
    \begin{equation}
        \nonumber
        \begin{split}
            x_k \in B(x,\eps)
            \iff z-(1-\alpha)y_k \in \alpha B(x,\eps) 
            \iff z \in \alpha B(x,\eps) + (1-\alpha)y_k.
        \end{split}
    \end{equation}
    By convexity $\alpha B(x,\eps) + (1-\alpha)y_k \subseteq A$, which shows the claim.
\end{proof}
\begin{prop}
\label{contain_prop}
    Let $A$ be an open, symmetric and convex set. Moreover, assume that $B(0,\eps) \subseteq A$ for some $\eps > 0$. Then, for any $t >1$, 
    $$\overline{A} \subseteq t A.$$
\end{prop}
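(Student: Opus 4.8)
The plan is to prove Proposition~\ref{contain_prop}, which states: if $A$ is open, symmetric and convex with $B(0,\eps)\subseteq A$ for some $\eps>0$, then $\overline A\subseteq tA$ for every $t>1$. First I would fix $t>1$ and take an arbitrary point $y\in\overline A$; the goal is to exhibit $y\in tA$, equivalently $\tfrac{1}{t}y\in A$. The natural idea is to write $\tfrac{1}{t}y$ as a convex combination of $y$ (which is in $\overline A$) and a point of $A^{\circ}=A$ coming from the ball around the origin, and then invoke Proposition~\ref{line_prop}, which tells us the half-open segment from an interior point to a closure point stays in the interior.

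Concretely, I would introduce $s:=1/t\in(0,1)$ and look for a point $z\in B(0,\eps)$ such that $\tfrac{1}{t}y = s' z + (1-s') y$ for some $s'\in(0,1]$; rearranging, one wants $\tfrac{1}{t}y - y = s' z - s' y$, i.e. I should instead directly write $\tfrac1t y$ as lying on the segment $[{-\lambda y},\,y)$ or similar. The cleanest route: set $w := \tfrac{1}{t}y$. If $y=0$ there is nothing to prove (then $w=0\in B(0,\eps)\subseteq A$), so assume $y\neq0$. Pick the point $p:=-\eps\,y/(2\|y\|)\in B(0,\eps)\subseteq A$. Then $w$ is a convex combination $w=\lambda p+(1-\lambda)y$ with $\lambda = \big(1-\tfrac1t\big)\big/\big(1+\tfrac{\eps}{2\|y\|}\big)\in(0,1)$ — one checks the coefficients sum to $1$ and solve $\lambda(-\eps/(2\|y\|))+(1-\lambda)=1/t$. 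Since $p\in A$ is an interior point of the open set $A$ and $y\in\overline A$, Proposition~\ref{line_prop} gives $w=\lambda p+(1-\lambda)y\in A^{\circ}=A$, hence $y=tw\in tA$. As $y\in\overline A$ was arbitrary, $\overline A\subseteq tA$.

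The main obstacle is purely bookkeeping: verifying that the scalar $\lambda$ lands in $(0,1)$ and that the convex-combination identity actually produces $w=\tfrac1t y$ (here the symmetry of $A$, used only to guarantee $p\in A$ when it could equally be taken as $+\eps y/(2\|y\|)$, is what lets us slide $w$ strictly below $y$ on a segment with an interior endpoint). A cosmetic simplification would be to handle the degenerate case $y=0$ first and then assume $\|y\|>0$ throughout. I would also remark that Proposition~\ref{line_prop} already incorporates the limiting argument along a sequence $y_n\to y$ in $A$, so no separate approximation is needed here; the symmetry hypothesis and the ball $B(0,\eps)$ are exactly what supply the required interior point on the correct side of $y$.
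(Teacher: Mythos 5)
Your argument is correct and rests on the same ingredients as the paper's proof --- Proposition~\ref{line_prop} applied to an interior point supplied by $B(0,\eps)$ together with the closure point $y$ --- just packaged more directly: you verify $\tfrac{1}{t}y\in A$ via an explicit convex combination with $\lambda=(1-\tfrac1t)/(1+\tfrac{\eps}{2\|y\|})\in(0,1)$, whereas the paper phrases the same idea through the cones $M(p,x)=\conv\big(B(0,p\eps),px\big)$. One small remark: symmetry of $A$ is not what places $p=-\eps y/(2\|y\|)$ in $A$ (its norm is $\eps/2$, so it lies in $B(0,\eps)$ regardless of symmetry), and in fact $p=0$ would serve just as well with $\lambda=1-\tfrac1t$.
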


\begin{proof}
    Write $\overline{A} = \partial A \sqcup A$. $A \subseteq tA$ is obvious by $0 \in A$ and the convexity. To show the inclusion for $\partial A$ define for $p > 0$ and $x \in B$ the scaled convex cone
    $$M (p,x) :=  \text{ conv}\big(B(0,p\eps),px \big)$$
    so that $M(1,x)\backslash x \subseteq A$ whenever $x \in \partial A$ (use Proposition \ref{line_prop}). Notice that 
    $M(1,x) \subseteq M(t,x) \backslash tx$ for $t>1$\footnote{The only problematic point could be $x$, but $[0,x) \subseteq A$, so $x \in [0,tx) \subseteq tA$ as $t>1$.}. 
    But now $$M(1,x) \subseteq M(t,x) \backslash tx \subseteq tA$$ for $x \in \partial A$. In particular $x$ is now an inner point of $tA$, which shows the claim.
\end{proof}

\begin{proof}[Proof of Lemma \ref{boundary_lemma}]
    Because $A$ is symmetric and convex, for $t>1$ it holds that $tA \supseteq A$ and so the map
    $$t \mapsto \mu(tA^{\circ})$$
    is monotone increasing. It is well known that a monotone increasing function is continuous, up to a countable set of discontinuities. Then, we can find $ c \in (C, C+ \delta)$ which is a point of continuity for the map just introduced.
    Write $cA = \partial cA \sqcup (cA)^{\circ}$.
    Use Proposition \ref{ball_prop} with $A$ to deduce that $A^\circ$ contains $B(0,\eps')$ for some $\eps' >0$. This ball is then also contained in $(cA)^{\circ} \supseteq A^{\circ}$. Obtain with Proposition \ref{contain_prop} and the fact that $cA$ is convex that for any $\eps >0$
    $$(c+\eps)A^{\circ} = (1+ \tfrac{\eps}{c})(cA)^{\circ} \supseteq \overline{(cA)^\circ} = cA$$ and so $$\partial cA \subseteq (c+\eps)A^{\circ} \backslash cA^{\circ}.$$
    We conclude that for $\eps >0$ arbitrary
    $$\mu(\partial cA) \leq \mu( (c+\eps)A^{\circ} ) - \mu(cA^{\circ}).$$
    Since $c$ is assumed to be a point of continuity the claim follows.
    \end{proof}

\end{appendix}



\begin{thebibliography}{99}

\bibitem{AlKi06} 
	C.D. Aliprantis and K.C. Border.
	\newblock{Infinite Dimensional Analysis: A Hitchhiker's Guide.}
	 Springer, 2006.


\bibitem{BP22a}
V. Betz and S. Polzer,
\newblock{A Functional Central Limit Theorem for Polaron Path Measures.}
\newblock{\em Communications on Pure and Applied Mathematics (2022)}, doi 10.1002/cpa.22080

\bibitem{BP22b}
V. Betz and S. Polzer,
\newblock{Effective Mass of the Polaron: a lower bound.}
\newblock {\em Communications in Mathematical Physics}, 399(1):178 -- 188,
	2023.


\bibitem{Bi99}
    P. Billingsley.
    \newblock{Convergence of Probability Measures.}
    \newblock{\em John Wiley \& Sons Inc.} 1999.



\bibitem{BrSe22}
	M. Brooks and R. Seiringer.
	\newblock The Fröhlich Polaron at Strong Coupling -- Part II: Energy-Momentum Relation and Effective Mass.
	\newblock {\em preprint: arXiv:2211.03353}, 2022	



\bibitem{DySp20}
	W. Dybalski and H. Spohn.
	\newblock Effective mass of the Polaron{\textemdash}revisited.
	\newblock {\em Annales Henri Poincar{\'{e}}}, 21(5):1573--1594, 2020.

\bibitem{HuWe20} 
	D. Hug and W. Weil.
	\newblock{Lectures on Convex Geometry.}
	\newblock{\em Graduate Texts in Mathematics. Volume 286.} Springer, 2020.


\bibitem{ItNi68} 
	K. Itô and M. Nisio.
	\newblock{On the convergence of sums of independent Banach space valued random variables.}
	\newblock{\em Osaka J. math. 5 (1) 35-48,} 1968.



\bibitem{LP48}
	L.~D. Landau and S.~I. Pekar
	\newblock Effective Mass of a Polaron
	\newblock {\em Zh. Eksp. Teor. Fiz. 18, 419–423,} 1948

\bibitem{LBH-Book}
J. L\"orinczi, F. Hiroshima and  V. Betz. 
{\em Feynman-Kac-Type Formulae and Gibbs Measures}, Berlin, Boston: De Gruyter, 2020. https://doi.org/10.1515/9783110330397





\bibitem{MV19}
C. Mukherjee and S.~R.~S. Varadhan.
\newblock Identification of the polaron measure I: Fixed coupling regime and
  the central limit theorem for large times.
\newblock {\em Communications on Pure and Applied Mathematics}, 73(2):350--383,
  August 2019.





\bibitem{Ro14}
    T. Royen.
    \newblock{A Simple Proof of the Gaussian Correlation Conjecture Extended to Some Multivariate Gamma Distributions.}
    \newblock{\em Far East Journal of Theoretical Statistics, 3:139-145.} 2014.

\bibitem{Se22} 
	M.\ Sellke.
	\newblock{Almost Quartic Lower Bound for the Fröhlich Polaron's Effective Mass via Gaussian Domination}
	\newblock{arXiv:2212.14023 [math-ph]}


\bibitem{Sp87}
	H. Spohn.
	\newblock Effective mass of the polaron: A functional integral approach.
	\newblock {\em Annals of Physics}, 175(2):278--318,  1987.


\bibitem{St23} 
	D. W. Stroock.
	\newblock{Gaussian Measures in Finite and Infinite Dimensions.}
	\newblock{\em Universitext.} Springer, 2023.



 



\end{thebibliography}
\end{document}